
\documentclass[reqno]{amsart}

\usepackage[latin1]{inputenc}
\usepackage{amsmath}
\usepackage{amsfonts}
\usepackage{amssymb}
\usepackage{fullpage}
\usepackage{hyperref}
\usepackage{enumitem}
\usepackage{todonotes}
\usepackage{cleveref}
\usepackage{breqn}
\setkeys{breqn}{breakdepth={1}}

\theoremstyle{plain}
\newtheorem{theorem}{Theorem}
\newtheorem{corollary}[theorem]{Corollary}
\newtheorem{lemma}[theorem]{Lemma}

\theoremstyle{definition}

\newtheorem{example}[theorem]{Example}

\theoremstyle{remark}

\begin{document}
    
    \author{Hieu D. Nguyen}
    \title{A Generalization of the Digital Binomial Theorem}
    \date{1-23-2015}

    \address{Department of Mathematics, Rowan University, Glassboro, NJ 08028.}
    \email{nguyen@rowan.edu}
    
    \subjclass[2010]{Primary 11}
    \keywords{binomial theorem; Sierpinski triangle; Prouhet-Thue-Morse sequence}
    
    \begin{abstract}
        We prove a generalization of the digital binomial theorem by constructing a one-parameter subgroup of generalized Sierpinski matrices.  In addition, we derive new formulas for the coefficients of Prouhet-Thue-Morse polynomials and describe group relations satisfied by generating matrices defined in terms of these Sierpinski matrices. 
    \end{abstract} 

    \maketitle

\section{Introduction}
The classical binomial theorem describes the expansion of $(x+y)^N$ in terms of binomial coefficients, namely for any non-negative integer $N$, we have
\begin{equation}\label{eq:binomial-theorem}
(x+y)^N = \sum_{k=0}^N \binom{N}{k}x^k y^{N-k}, 
\end{equation}
where $\binom{N}{k}$ are defined in terms of factorials:
\[
\binom{N}{k}=\frac{N!}{k!(N-k)!}.
\]

In \cite{N1}, the author introduced a digital version of this theorem where the exponents appearing in (\ref{eq:binomial-theorem}) are viewed as sums of digits.  To illustrate this, consider the binomial theorem for $N=2$:
\begin{equation} \label{eq:binomial-theorem-n=2}
(x+y)^2  =x^2y^0+x^1y^1+x^1y^1+x^0y^2.
\end{equation}
It is easy to verify that (\ref{eq:binomial-theorem-n=2}) is equivalent to
\begin{equation} \label{eq:sum-of-digit-expansion-n=2}
(x+y)^{s(3)}  = x^{s(3)}y^{s(0)}+x^{s(2)}y^{s(1)}+x^{s(1)}y^{s(2)}+x^{s(0)}y^{s(3)},
\end{equation}
where $s(k)$ denotes the sum of digits of $k$ expressed in binary.  For example, $s(3)=s(1\cdot 2^1+1\cdot 2^0)=2$.  More generally, we have

\begin{theorem}[Digital Binomial Theorem \cite{N1}] \label{th:digital-binomial-theorem}
 Let $n$ be a non-negative integer.  Then
\begin{equation} \label{eq:digital-binomial-theorem-carry-free}
(x+y)^{s(n)}  = \sum_{\substack{0\leq m \leq n \\  (m,n-m) \ \textrm{carry-free}}}x^{s(m)}y^{s(n-m)}.
\end{equation}
\end{theorem}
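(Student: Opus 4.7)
My plan is to reduce the identity to the classical binomial theorem by reinterpreting the indexing set on the right-hand side combinatorially. Write $n$ in binary as $n=\sum_{i\geq 0} n_i 2^i$ with $n_i\in\{0,1\}$, and let $B(n)=\{i : n_i = 1\}$ denote the set of positions of the $1$-bits of $n$, so that $s(n) = |B(n)|$.

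The first step is to establish a bijection between carry-free pairs $(m, n-m)$ with $0 \leq m \leq n$ and subsets $S \subseteq B(n)$. Indeed, $(a,b)$ is carry-free means that in the binary expansions of $a$ and $b$, no bit position has a $1$ in both. So if we set $a + b = n$, then for each $i \in B(n)$ exactly one of $a$ or $b$ must contribute a $1$ at position $i$, while both contribute $0$ at positions outside $B(n)$. Assigning $S = \{i \in B(n): m_i = 1\}$ gives $m = \sum_{i \in S} 2^i$ and $n - m = \sum_{i \in B(n)\setminus S} 2^i$, and this map is a bijection. In particular $s(m) = |S|$ and $s(n-m) = s(n) - |S|$.

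The second step is to substitute this parametrization into the right-hand side of \eqref{eq:digital-binomial-theorem-carry-free} and group terms by $k := |S|$:
\[
\sum_{\substack{0\leq m \leq n \\ (m,n-m)\ \text{carry-free}}} x^{s(m)} y^{s(n-m)}
= \sum_{S \subseteq B(n)} x^{|S|} y^{s(n) - |S|}
= \sum_{k=0}^{s(n)} \binom{s(n)}{k} x^{k} y^{s(n)-k},
\]
since there are exactly $\binom{s(n)}{k}$ subsets of $B(n)$ of size $k$. Applying the classical binomial theorem \eqref{eq:binomial-theorem} to the right side yields $(x+y)^{s(n)}$, completing the proof.

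I do not anticipate a serious obstacle here: the only substantive point is verifying that the carry-free condition is equivalent to choosing, for each $1$-bit of $n$, which of $m$ or $n - m$ receives it, and that the bit positions outside $B(n)$ must be zero in both summands. This is immediate from the definition of binary addition without carries, so the proof is essentially a bookkeeping argument that reduces the digital statement to its classical counterpart.
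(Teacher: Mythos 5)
Your proof is correct, and it takes a genuinely different route from the one this paper supplies. The paper treats the theorem as the $b=2$, $j=n$, $k=0$ instance of the matrix identity $S_N(x)S_N(y)=S_N(x+y)$, which it establishes by induction on $N$ using the Kronecker-product structure of the Sierpinski matrices together with Gould's convolution identity (Lemma~\ref{le:binomial-sum-of-product}); the binary theorem then drops out as the $b=2$ case of Theorem~\ref{th:digital-binomial-theorem-non-binary}. You instead observe that carry-free decompositions $m+(n-m)=n$ biject with subsets $S\subseteq B(n)$ of the $1$-bit positions (since $n_i=0$ forces $m_i=(n-m)_i=0$ and $n_i=1$ forces exactly one of them to be $1$), so the right-hand side collapses to $\sum_{k}\binom{s(n)}{k}x^k y^{s(n)-k}$ and the statement reduces to the classical binomial theorem. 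Your argument is more elementary and self-contained, and it inverts the paper's remark that the classical theorem is the special case $n=2^N-1$. What the matrix approach buys is the base-$b$ generalization: for $b>2$ the digits $n_i$ can exceed $1$, the subset bijection becomes a digit-by-digit choice $0\le m_i\le n_i$, and the per-digit sum is no longer a simple count of subsets but requires Gould's identity $\sum_{k}\binom{x+k-1}{k}\binom{y+n_i-k-1}{n_i-k}=\binom{x+y+n_i-1}{n_i}$ --- which is exactly the ingredient the one-parameter-subgroup proof packages. For the binary statement as posed, your proof is complete and has no gaps.
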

\noindent Here, a pair of non-negative integers $(j,k)$ is said to be {\em carry-free} if their addition involves no carries when performed in binary.  For example, $(8,2)$ is carry-free since $8+2=(1\cdot 2^3+0\cdot 2^2+0\cdot 2^1+0\cdot 2^0)+1\cdot 2^1=10$ involves no carries.  It is clear that $(j,k)$ is carry-free if and only if $s(j)+s(k)=s(j+k)$ (see \cite{BEJ, N1}.  Also, observe that if $n=2^N-1$, then (\ref{eq:digital-binomial-theorem-carry-free}) reduces to (\ref{eq:binomial-theorem}). 

In this paper we generalize Theorem \ref{th:digital-binomial-theorem} to any base $b\geq 2$.  To state this result, we shall need to introduce a {\em digital dominance} partial order on $\mathbb{N}$ as defined by Ball, Edgar, and Juda in \cite{BEJ}.  Let
\begin{align*}
n & =n_0b^0+n_1b^1+\ldots + n_{N-1}b^{N-1}
\end{align*}
represent the base-$b$ expansion of $n$ and denote $d_i:=d_i(n)=n_i$ to be the $i$-th digit of $n$ in base $b$.  We shall say that $m$ is {\em digitally} less than or equal to $n$ if $m_i\leq n_i$ for all $i=0,1,\ldots, N-1$.  In that case, we shall write $m\preceq n$.  We are now ready to state our result.

\begin{theorem} \label{th:digital-binomial-theorem-non-binary}
Let $n$ be a non-negative integer.  Then
\begin{equation} \label{eq:digital-binomial-theorem-non-binary}
\prod_{i=0}^{N-1}\binom{x+y+d_i(n)-1}{d_i(n)} =\sum_{0\leq m\preceq n}\left[ \prod_{i=0}^{N-1}\binom{x+d_i(m)-1}{d_i(m)} \prod_{i=0}^{N-1}\binom{y+d_i(n-m)-1}{d_i(n-m)} \right].
\end{equation}
\end{theorem}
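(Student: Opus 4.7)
The plan is to reduce the identity to a single-digit statement and then invoke a Vandermonde--Chu convolution at each digit. The key observation is that the condition $m \preceq n$ forces the base-$b$ subtraction $n - m$ to be borrow-free, so $d_i(n-m) = d_i(n) - d_i(m)$ for every $i$. Consequently the digits of $m$ may be chosen independently in the ranges $\{0, 1, \ldots, d_i(n)\}$, and the indexing set $\{m : 0 \le m \preceq n\}$ is in bijection with the Cartesian product $\prod_{i=0}^{N-1}\{0, 1, \ldots, d_i(n)\}$ via the digit map $m \mapsto (d_0(m), \ldots, d_{N-1}(m))$.

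Using this bijection, I would rewrite the right-hand side of \eqref{eq:digital-binomial-theorem-non-binary} by interchanging the sum over $m$ with the product over $i$. Writing $k_i = d_i(m)$ and substituting $d_i(n-m) = d_i(n) - k_i$, the right-hand side factors as
\begin{equation*}
\prod_{i=0}^{N-1}\left[\sum_{k=0}^{d_i(n)}\binom{x+k-1}{k}\binom{y+d_i(n)-k-1}{d_i(n)-k}\right].
\end{equation*}
At this point the theorem reduces to proving, for each integer $d \ge 0$, the single-digit identity
\begin{equation*}
\sum_{k=0}^{d}\binom{x+k-1}{k}\binom{y+d-k-1}{d-k} = \binom{x+y+d-1}{d}.
\end{equation*}
This is the Vandermonde--Chu convolution in its rising-factorial form: using $\binom{x+k-1}{k} = (-1)^k\binom{-x}{k}$ and similarly for the $y$-factor, the left side becomes $(-1)^d\sum_k\binom{-x}{k}\binom{-y}{d-k} = (-1)^d\binom{-x-y}{d} = \binom{x+y+d-1}{d}$, where the middle step is classical Vandermonde applied to polynomial (hence arbitrary real) arguments. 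Applying this identity with $d = d_i(n)$ inside the product over $i$ yields the left-hand side of \eqref{eq:digital-binomial-theorem-non-binary}.

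The only genuinely delicate step is justifying the digit-wise factorization of the sum over $m \preceq n$; once that is in place, the rest is a one-line appeal to Vandermonde--Chu. I would take care to record explicitly that $m \preceq n$ is equivalent to the borrow-free condition on $n - m$ in base $b$, in parallel with the carry-free equivalence $s(j)+s(k)=s(j+k)$ used in Theorem~\ref{th:digital-binomial-theorem}. Note also that both sides are polynomials in $x$ and $y$, so the Vandermonde step is valid as a polynomial identity rather than only at integer values.
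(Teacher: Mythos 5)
Your proof is correct, but it takes a genuinely different route from the paper. The paper never touches the sum over $m$ directly: it builds the generalized Sierpinski matrices $S_{b,N}(x)$ as iterated Kronecker products, proves a closed form for their entries (Theorem~\ref{th:formula-for-entries-arbitrary-b}), establishes the one-parameter group law $S_{b,N}(x)S_{b,N}(y)=S_{b,N}(x+y)$ by induction on $N$, and then reads off the theorem as the $(n,0)$-entry of that matrix identity. You instead exploit the bijection $\{m: m\preceq n\}\cong\prod_i\{0,\dots,d_i(n)\}$ (valid because $m\preceq n$ makes the subtraction borrow-free, so $d_i(n-m)=d_i(n)-d_i(m)$), factor the sum digit-by-digit, and finish with Vandermonde--Chu at each digit. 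The computational core is identical --- your single-digit identity $\sum_{k=0}^{d}\binom{x+k-1}{k}\binom{y+d-k-1}{d-k}=\binom{x+y+d-1}{d}$ is exactly the paper's Lemma~\ref{le:binomial-sum-of-product} after the shift $x\mapsto x-1$, $y\mapsto y-1$, and the paper's induction on $N$ via the Kronecker recursion is secretly performing your digit-wise factorization one digit at a time. What your approach buys is brevity and transparency: the theorem falls out in a few lines without any matrix apparatus, and the negative-binomial reduction to classical Vandermonde is a clean replacement for Gould's lemma. What the paper's approach buys is the one-parameter subgroup structure itself, which is the real object of interest there and is reused in the later sections on Prouhet--Thue--Morse polynomials and group relations; the digital binomial theorem is presented as a corollary of that structure rather than as the destination. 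Your two points of care --- the borrow-free equivalence and the validity of Vandermonde as a polynomial identity --- are exactly the right ones to record.
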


Let $\mu_j(n):=\mu_j^{(b)}(n)$ denote the multiplicity of the digit $j>0$ in the base-$b$ expansion of $n$, i.e.,
\[
\mu_j(n)=|\{i: d_i(n)=j\}|.
\]
As a corollary, we obtain

\begin{corollary} \label{co:digital-binomial-theorem-non-binary}
Let $n$ be a non-negative integer.  Then
\begin{align*}
\prod_{j=0}^{b-1}\binom{x+y+j-1}{j}^{\mu_j(n)} & =\sum_{0\leq m\preceq n}\left[ \prod_{j=1}^{b-1}\binom{x+j-1}{j}^{\mu_j(m)} \prod_{j=1}^{b-1}\binom{y+j-1}{j}^{\mu_j(n-m)} \right].
\end{align*}
\end{corollary}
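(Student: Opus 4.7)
The plan is to derive the corollary from Theorem \ref{th:digital-binomial-theorem-non-binary} by a purely mechanical reindexing of each of the three products that appear there: grouping factors by digit value rather than by digit position.

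First I would observe that for any non-negative integer $k$ (with base-$b$ expansion of length $N$) and any auxiliary variable $z$, the factor $\binom{z+d_i(k)-1}{d_i(k)}$ depends only on the value of the digit $d_i(k)\in\{0,1,\ldots,b-1\}$ and not on the position $i$. Collecting identical factors therefore gives the identity
\[
\prod_{i=0}^{N-1}\binom{z+d_i(k)-1}{d_i(k)} \;=\; \prod_{j=0}^{b-1}\binom{z+j-1}{j}^{\mu_j(k)},
\]
which is the key lemma behind the corollary. This is nothing more than the commutativity of multiplication together with the definition of $\mu_j(k)$ as the number of positions of $k$ carrying the digit $j$.

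Next I would apply this reindexing three times to equation (\ref{eq:digital-binomial-theorem-non-binary}): once on the left-hand side with $k=n$ and $z=x+y$, and twice inside the summand on the right with $(k,z)=(m,x)$ and $(k,z)=(n-m,y)$. Substituting converts Theorem \ref{th:digital-binomial-theorem-non-binary} verbatim into the statement of the corollary.

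Finally I would note two small cosmetic points. The products on the right-hand side of the corollary are written starting from $j=1$ rather than $j=0$; this is legitimate because $\binom{z-1}{0}=1$, so every $j=0$ factor is trivial, and the same observation allows the $j=0$ factor on the left-hand side to be kept (as stated) or dropped without effect. No real obstacle is anticipated: the whole argument is bookkeeping, and the only subtlety worth mentioning is that $\mu_j(m)$ and $\mu_j(n-m)$ must be computed relative to the common base-$b$ expansion of length $N$ used for $n$, so that the leading-zero positions of $m$ and $n-m$ (which must exist whenever $m\prec n$) are counted consistently into $\mu_0$.
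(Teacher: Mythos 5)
Your proof is correct and is exactly the derivation the paper intends (the paper states the corollary without proof, treating it as immediate from Theorem \ref{th:digital-binomial-theorem-non-binary}): regroup each product over digit positions into a product over digit values using the multiplicities $\mu_j$, and note that the $j=0$ factors are trivially $1$. Your remark about counting leading zeros consistently is a sensible precaution, though it is harmless here precisely because those factors equal $1$.
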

\noindent Observe that if $b=2$, then Corollary \ref{co:digital-binomial-theorem-non-binary} reduces to Theorem \ref{th:digital-binomial-theorem}.

The source behind Theorem \ref{th:digital-binomial-theorem} is a one-parameter subgroup of Sierpinski matrices, investigated by Callan in \cite{C}, which encodes the digital binomial theorem.  To illustrate this, define a sequence of lower-triangular matrix functions $S_N(x)$ of dimension $2^N\times 2^N$ recursively by
\begin{equation}
S_1(x)=\left(
\begin{array}{rr}
1 & 0 \\
x & 1 \\
\end{array}
\right), \ \ S_{N+1}(x)=S_1(x)\otimes S_N(x),
\end{equation}
where $\otimes$ denotes the Kronecker product of two matrices.  For example, $S_2(x)$ and $S_3(x)$ can be computed as follows:
\begin{align*}
S_2(x) & =S_1(x)\otimes S_1(x) = \left(\begin{array}{rr} 1\cdot S_1(x) & 0\cdot S_1(x) \\ x\cdot S_1(x) & 1\cdot S_1(x) \end{array} \right)
= \left(
\begin{array}{rrrr}
1 & 0 & 0 & 0 \\
x & 1 & 0 & 0 \\
x & 0 & 1 & 0 \\
x^2 & x & x & 1
\end{array}
\right),
\\
\\
S_3(x) & =S_1(x)\otimes S_2(x) = \left(\begin{array}{rr} 1\cdot S_2(x) & 0\cdot S_2(x) \\ x\cdot S_2(x) & 1\cdot S_2(x) \end{array} \right)
=\left(
\begin{array}{llllllll}
1 & 0 & 0 & 0 & 0 & 0 & 0 & 0  \\
x & 1 & 0 & 0 & 0 & 0 & 0 & 0   \\
x & 0 & 1 & 0 & 0 & 0 & 0 & 0   \\
x^2 & x & x & 1 & 0 & 0 & 0 & 0  \\
x & 0 & 0 & 0 & 1 & 0 & 0 & 0  \\
x^2 & x & 0 & 0 & x & 1 & 0 & 0  \\
x^2 & 0 & x & 0 & x & 0 & 1 & 0  \\
x^3 & x^2 & x^2 & x & x^2 & x & x & 1
\end{array}
\right).
\end{align*}
Observe that when $x=1$, the infinite matrix $S=\lim_{N\rightarrow \infty}S_N(1)$ is of course Sierpinski's triangle.  

In \cite{C}, a formula for the entries of $S_N(x)=(\alpha_N(j,k,x))$, $0\leq j,k\leq 2^N-1$, is given in terms of the sum-of-digits function:
\begin{equation} \label{eq:formula-for-entries}
\alpha_N(j,k,x)=\left\{
\begin{array}{cl}
x^{s(j-k)}, & \mathrm{if} \ 0\leq k\leq j \leq 2^N-1 \ \mathrm{and} \  (k, j-k) \ \textrm{are} \ \textrm{carry-free}  \\ 
0, & \mathrm{otherwise}.
\end{array}
\right.
\end{equation}
Moreover, it was proven that $S_N(x)$ forms a one-parameter subgroup of $SL(2^N,\mathbb{R})$, i.e., the group of $2^N\times 2^N$ real matrices with determinant one.  Namely, we have
\begin{equation} \label{eq:one-parameter-matrix-product}
S_N(x)S_N(y)=S_N(x+y).
\end{equation}
If we denote the entries of $S_N(x)S_N(y)$ by $t_N(j,k)$, then the equality 
\[
t_N(j,k)=\sum_{i=0}^{2^N-1}\alpha_N(j,i,x)\alpha_N(i,k,y)=\alpha_N(j,k,x+y)\]
corresponds precisely to Theorem \ref{th:digital-binomial-theorem} with $j=0$ and $k=0$.  For example, if $N=2$, then (\ref{eq:one-parameter-matrix-product}) becomes
\begin{align*}
\left(
\begin{array}{llll}
1 & 0 & 0 & 0 \\
x & 1 & 0 & 0 \\
x & 0 & 1 & 0 \\
x^2 & x & x & 1 
\end{array}
\right)
\left(
\begin{array}{llll}
1 & 0 & 0 & 0 \\
y & 1 & 0 & 0 \\
y & 0 & 1 & 0 \\
y^2 & y & y & 1 
\end{array}
\right) & =
\left(
\begin{array}{cccc}
1 & 0 & 0 & 0 \\
x+y & 1 & 0 & 0 \\
x+y & 0 & 1 & 0 \\
(x+y)^2 & x+y & x+y & 1 
\end{array}
\right).
\end{align*}

The rest of this paper is devoted to generalizing Callan's construction of Sierpinski matrices to arbitrary bases and considering two applications of them.  In Section 2, we use these generalized Sierpinski matrices to prove Theorem \ref{th:digital-binomial-theorem-non-binary}.  In Section 3, we demonstrate how these matrices arise in the construction of Prouhet-Thue-Morse polynomials defined in \cite{N2}.  In Section 4, we describe a group presentation in terms of generators defined through these matrices and show that these generators satisfy a relation that generalizes E. Ferrand's result in \cite{F1}.


\section{Sierpinski Triangles}

To prove Theorem \ref{th:digital-binomial-theorem-non-binary}, we consider the following generalization of the Sierpinski matrix $S_N(x)$ in terms of binomial coefficients.  Define lower-triangular matrices $S_{b,N}(x)$ of dimension $b^N\times b^N$ recursively by
\begin{align*}
S_{b,1}(x) & =\left(
\begin{array}{cccll}
1 & 0 & 0 & \ldots & 0 \\
\binom{x}{1} & 1 & 0 & \ldots & 0 \\
\binom{x+1}{2} & \binom{x}{1} & 1 & \ldots & 0 \\
\vdots & \vdots & \vdots & \ddots  & \vdots \\
\binom{x+b-2}{b-1} & \binom{x+b-3}{b-2} & \binom{x+b-4}{b-3} & \ldots &  1
\end{array}
\right)=
\left\{
\begin{array}{cl}
\binom{x+j-k-1}{j-k}, & \textrm{if } 0\leq k \leq j \leq b-1 \\
0, & \textrm{otherwise}.
\end{array}
\right. \\
\end{align*}
and for $N>1$,
\[
S_{b,N+1}(x) =S_{b,1}(x)\otimes S_{b,N}(x).
\]

\begin{example} To illustrate our generalized Sierpinksi matrices, we calculate $S_{3,1}(x)$ and $S_{3,2}(x)$:
\begin{align*}
S_{3,1}(x)
& =\left(
\begin{array}{ccc}
1 & 0 & 0 \\
\binom{x}{1} & 1 & 0  \\
\binom{x+1}{2} & \binom{x}{1} & 1
\end{array} 
\right),
\\
S_{3,2}(x) & =S_{3,1}(x)\otimes S_{3,1}(x) \\
& =\left(
\begin{array}{ccccccccc}
1 & 0 & 0 & 0 & 0 & 0 & 0 & 0 & 0  \\
\binom{x}{1} & 1 & 0 & 0 & 0 & 0 & 0 & 0 & 0  \\
\binom{x+1}{2} & \binom{x}{1} & 1 & 0 & 0 & 0 & 0 & 0 & 0  \\
\binom{x}{1} & 0 & 0 & 1 & 0 & 0 & 0 & 0 & 0  \\
\binom{x}{1}\binom{x}{1} & \binom{x}{1} & 0 & \binom{x}{1} & 1 & 0 & 0 & 0 & 0  \\
\binom{x}{1}\binom{x+1}{2} & \binom{x}{1}\binom{x}{1} & \binom{x}{1} & \binom{x+1}{2} & \binom{x}{1} & 1 & 0 & 0 & 0  \\
\binom{x+1}{2} & 0 & 0 & \binom{x}{1} & 0 & 0 & 1 & 0 & 0  \\
\binom{x+1}{2}\binom{x}{1} & \binom{x+1}{2} & 0 & \binom{x}{1}\binom{x}{1} & \binom{x}{1} & 0 & \binom{x}{1} & 1 & 0 \\
\binom{x+1}{2}\binom{x+1}{2} & \binom{x+1}{2}\binom{x}{1} & \binom{x+1}{2} & \binom{x}{1}\binom{x+1}{2} & \binom{x}{1}\binom{x}{1} & \binom{x}{1} & \binom{x+1}{2} & \binom{x}{1} & 1 \\
\end{array}
\right).
\end{align*}
\end{example}

We now generalize Callan's result for $S_N(x)$ by presenting a formula for the entries of $S_{b,N}(x)$ (see \cite{IM} for a similar generalization of Callan's result but along a different vein).

\begin{theorem} \label{th:formula-for-entries-arbitrary-b}
Let $\alpha_N(j,k):=\alpha_N(j,k,x)$ denote the $(j,k)$-entry of $S_{b,N}(x)$.
Then
\begin{equation} \label{eq:formula-for-entries-arbitrary-b}
\alpha_N(j,k)=\left\{
\begin{array}{cl}
\binom{x+d_0-1}{d_0} \binom{x+d_1-1}{d_1} \cdots \binom{x+d_{N-1}-1}{d_{N-1}}, & \mathrm{if} \ 0\leq k\leq j \leq b^N-1 \ \mathrm{and} \  k \preceq j  \\ 
\\
0, & \mathrm{otherwise}.
\end{array}
\right.
\end{equation}
where $j-k=d_0b^0+d_1b^1+\ldots + d_Lb^L$ is the base-$b$ expansion of $j-k$, assuming $j\geq k$.
\end{theorem}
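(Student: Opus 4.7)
The plan is to induct on $N$, exploiting the recursive definition $S_{b,N+1}(x) = S_{b,1}(x) \otimes S_{b,N}(x)$. The base case $N=1$ is immediate from the definition, since for $0 \le k \le j \le b-1$ the value $j-k$ is already a single base-$b$ digit, the condition $k \preceq j$ reduces to $k \le j$, and the formula $\binom{x+(j-k)-1}{j-k}$ matches exactly how $S_{b,1}(x)$ was specified.

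For the inductive step, I would use the standard Kronecker indexing: any index in $\{0,\ldots,b^{N+1}-1\}$ decomposes uniquely as $j = r b^N + p$ with $0 \le r \le b-1$ and $0 \le p \le b^N-1$, and similarly $k = s b^N + q$. By the Kronecker product rule,
\[
\alpha_{N+1}(j,k,x) \;=\; \bigl(S_{b,1}(x)\bigr)_{r,s} \cdot \alpha_N(p,q,x).
\]
The base-$b$ digits of $j$ are then $d_0(p),\ldots,d_{N-1}(p),r$ and likewise for $k$. From this it is clear that $k \preceq j$ if and only if $s \le r$ and $q \preceq p$; the first factor vanishes unless $s \le r$, and by the inductive hypothesis the second factor vanishes unless $q \preceq p$, so the claimed support condition is established.

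When $k \preceq j$, I still need to match the product formula. Here the key observation, which is the only mildly delicate step, is that the digit-dominance $q \preceq p$ forces the subtraction $p-q$ to be carry-free (no borrow): the $i$-th base-$b$ digit of $p-q$ is simply $d_i(p)-d_i(q)$. Consequently $j-k = (r-s)b^N + (p-q)$ has base-$b$ digits $d_0(p)-d_0(q),\ldots,d_{N-1}(p)-d_{N-1}(q),r-s$. The inductive hypothesis gives
\[
\alpha_N(p,q,x) \;=\; \prod_{i=0}^{N-1}\binom{x+\bigl(d_i(p)-d_i(q)\bigr)-1}{d_i(p)-d_i(q)},
\]
while the $N=1$ formula gives $(S_{b,1}(x))_{r,s} = \binom{x+(r-s)-1}{r-s}$. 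Multiplying the two produces exactly the product displayed in \eqref{eq:formula-for-entries-arbitrary-b} for the digits of $j-k$, completing the induction.

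The main (and only real) obstacle is the carry-free observation about $p-q$; everything else is bookkeeping about how the Kronecker product stacks digits. Since carry-freeness is precisely what $\preceq$ encodes, this step is short, but it is the conceptual heart of why the recursive Kronecker construction reproduces the digital formula.
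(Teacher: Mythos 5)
Your proposal is correct and follows essentially the same route as the paper: induction on $N$ via the Kronecker recursion, splitting the indices into a high base-$b$ digit and a residual index, and observing that digit-dominance makes the subtraction borrow-free so the digits of $j-k$ are the digitwise differences. The paper's proof states this last point a bit more tersely (it simply writes $j-k=d_0b^0+\cdots+d_Nb^N$ with $d_N=p-q$), but the substance is identical.
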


\begin{proof} We argue by induction on $N$.  It is clear that (\ref{eq:formula-for-entries-arbitrary-b}) holds for $S_{b,1}(x)$.  Next, assume that (\ref{eq:formula-for-entries-arbitrary-b}) holds for $S_{b,N}(x)$ and let $\alpha_{N+1}(j,k)$ be an arbitrary entry of $S_{b,N+1}(x)$, where $pb^N\leq j \leq (p+1)b^N-1$ and $qb^N \leq k \leq (q+1)b^N-1$ for some non-negative integers $p,q\in \{0,1,\ldots,b-1\}$.  Set $j'=j-pb^N$ and $k'=k-qb^N$.  We consider two cases:

\noindent Case 1. $p<q$.  Then $j\leq k$ and $\alpha_{N+1}(j,k)=0\cdot \alpha_N(j',k')=0$.

\noindent Case 2. $p\geq q$.  Then $j\geq k$ and
\begin{equation}\label{eq:case-2-part-1}
\alpha_{N+1}(j,k)=\binom{x+p-q-1}{p-q}\alpha_N(j',k').
\end{equation}
Let $j-k=d_0b^0+d_1b^1+\ldots + d_Nb^N$, where $d_N=p-q$.  Then $j'-k'=d_0b^0+d_1b^1+\ldots +d_{N-1}b^{N-1}$.  By assumption,
\begin{equation} \label{eq:case-2-part-2}
\alpha_{N}(j',k')=\left\{
\begin{array}{cl}
\binom{x+d_0-1}{d_0} \binom{x+d_1-1}{d_1} \cdots \binom{x+d_{N-1}-1}{d_{N-1}} & \mathrm{if} \ 0\leq k' \leq j' \leq b^N-1 \ \mathrm{and} \  k' \preceq j',  \\ 
\\
0 & \textrm{otherwise.}
\end{array}
\right.
\end{equation}
Since $k \preceq j$ if and only if $k' \preceq j'$, it follows from (\ref{eq:case-2-part-1}) and (\ref{eq:case-2-part-2}) that
\begin{equation}
\alpha_{N+1}(j,k)=\left\{
\begin{array}{cl}
\binom{x+d_0-1}{d_0} \binom{x+d_1-1}{d_1} \cdots \binom{x+d_N-1}{d_N} & \mathrm{if} \ 0\leq k\leq j \leq b^{N+1}-1 \ \mathrm{and} \  k \preceq j,  \\ 
\\
0 & \textrm{otherwise.}
\end{array}
\right.
\end{equation}
Thus,  (\ref{eq:formula-for-entries-arbitrary-b})  holds for $S_{b,N+1}$.
\end{proof}

Next, we show that $S_{b,N}(x)$ forms a one-parameter subgroup of $SL(b^N,\mathbb{R})$.  To prove this, we shall need the following two lemmas; the first is due to Gould \cite{G} and the second follows easily from the first through an appropriate change of variables.

\begin{lemma}[Gould \cite{G}] \label{le:binomial-sum-of-product}
\begin{equation}\label{eq:binomial-sum-formula}
\sum_{k=0}^n \binom{x+k}{k}\binom{y+n-k}{n-k} = \binom{x+y+n+1}{n}.
\end{equation}
\end{lemma}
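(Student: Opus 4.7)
The plan is to recognize Gould's identity as an instance of the Chu--Vandermonde convolution after an upper-index negation, and then to present it via generating functions for cleanliness.

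First, I would invoke the standard generating function
\[
\sum_{k=0}^{\infty} \binom{x+k}{k} z^k = \frac{1}{(1-z)^{x+1}},
\]
valid as a formal power series in $z$. The left-hand side of \eqref{eq:binomial-sum-formula} is exactly the coefficient of $z^n$ in the Cauchy product
\[
\frac{1}{(1-z)^{x+1}} \cdot \frac{1}{(1-z)^{y+1}} = \frac{1}{(1-z)^{x+y+2}},
\]
and a second application of the same generating function identity (with parameter $x+y+1$ in place of $x$) extracts this coefficient as $\binom{x+y+n+1}{n}$.

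If a purely algebraic proof were preferred, I would instead apply upper-index negation $\binom{x+k}{k} = (-1)^k\binom{-x-1}{k}$ to both factors in the sum, invoke the classical Vandermonde convolution $\sum_{k=0}^n \binom{a}{k}\binom{b}{n-k} = \binom{a+b}{n}$ with $a=-x-1$, $b=-y-1$, and then negate the upper index once more to reach $\binom{x+y+n+1}{n}$. Either route is essentially one line of symbol manipulation, so there is no real obstacle here; the lemma is cited rather than reproved in \cite{G}, and I would include only as much of the argument as is needed to make the paper self-contained before moving on to the accompanying change-of-variables corollary that will actually be used in the proof of \Cref{th:digital-binomial-theorem-non-binary}.
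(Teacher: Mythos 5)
Your proof is correct, but it takes a genuinely different route from the paper. The paper offers two arguments: a combinatorial one (counting $n$-element subsets of a union $A\cup B\cup C$ with $|A|=x$, $|B|=y$, $|C|=n+1$, partitioned by a carefully defined ``index'' $k_S$) and an analytic one that rewrites \eqref{eq:binomial-sum-formula} as a telescoping identity for the beta function and sketches an induction. Your generating-function argument --- multiplying $\sum_k \binom{x+k}{k}z^k=(1-z)^{-(x+1)}$ by its $y$-analogue and reading off the coefficient of $z^n$ in $(1-z)^{-(x+y+2)}$ --- and your upper-negation-plus-Vandermonde argument are both standard one-line derivations that the paper does not use. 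They have one concrete advantage: they establish \eqref{eq:binomial-sum-formula} directly as a polynomial identity in $x$ and $y$, which is the form actually needed when the lemma is applied to the matrix entries of $S_{b,1}(x)$; the paper's combinatorial count literally proves the statement only for non-negative integers $x,y$ and tacitly relies on the (routine) observation that two polynomials agreeing at all such values must coincide. What the paper's combinatorial proof buys in exchange is a self-contained bijective explanation of \emph{why} the convolution collapses, in keeping with the enumerative flavor of the rest of the paper. Your closing remark is also accurate: what is used downstream is the shifted form in \Cref{le:binomial-sum-of-product-2}, obtained from \eqref{eq:binomial-sum-formula} by the substitution $w=v-q$.
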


\begin{proof} In \cite{G}, Gould derives (\ref{eq:binomial-sum-formula}) as a special case of a generalization of Vandemonde's convolution formula.  We shall proof  (\ref{eq:binomial-sum-formula}) more directly by presenting two different proofs, one using a combinatorial argument and the other using the beta function.

\vspace{5pt}
\noindent \textit{Combinatorial argument}. Let $A$, $B$, and $C=\{0,1,\ldots,n\}$ denote three sets containing $x$, $y$, and $n+1$ elements (all distinct), respectively, where $n$ is a non-negative integer.  For any non-negative integer $k$, define $A_k=A\cup \{0,\dots,k-1\}$ and $B_k=B\cup \{k+1,\dots,n\}$.  Then given any $n$-element subset $S$ of $A\cup B\cup C$, there exists a unique integer $k_S$ in $C-S$, called the index of $S$ with respect to $A$ and $B$, such that $|S\cap A_{k_S}|=k_S$ and $|S\cap B_{k_S}|=n-k_S$.  To see this, define $S_A=A\cap S$, $S_B=B\cap S$, and $T=C-S$.  We begin by deleting $|S_A|$ consecutive elements from $T$, in increasing order and beginning with its smallest element, to obtain a subset $T'$.  We then delete $|S_B|$ consecutive elements from $T'$, in decreasing order and beginning with its largest element, to obtain a subset $T''$, which must now contain a single element denoted by $k_S$.  It is now clear that $|S\cap A_{k_S}|=k_S$ and $|S\cap B_{k_S}|=n-k_S$.

To prove  (\ref{eq:binomial-sum-formula}), we count the $n$-element subsets $S$ of $A\cup B \cup C$ in two different ways.  On the one hand, since $|A\cup B \cup C|=x+y+n+1$, the number of such subsets is given by $\binom{x+y+n+1}{n}$.  On the other hand, we partition all such $n$-element subsets into equivalence classes according to each subset's index value.  Since $|S\cap A_{k}|=k$ and $|S\cap B_{k}|=n-k$, it follows that the number of $n$-element subsets $S$ having the same index $k$ is given by $\binom{x+k}{k}\binom{y+n-k}{n-k}$ and total number of $n$-element subsets is given by
\[
\sum_{k=0}^n \binom{x+k}{k}\binom{y+n-k}{n-k}.
\]
Lastly, we equate the two answers to obtain  (\ref{eq:binomial-sum-formula}).

\vspace{5pt}
\noindent \textit{Analytic argument}. Recall that the beta function is defined as $B(x,y)=\Gamma(x)\Gamma(y)/\Gamma(x+y)$, where $\Gamma(x)$ is the gamma function.  If $x$ is a non-negative integer, then $\Gamma(x+1)=x!$ and
\[
B(x,y)=\frac{(x-1)!(y-1)!}{(x+y-1)!}.
\]
For convenience, we shall write $x!$ to represent $\Gamma(x+1)$ even when $x$ is not non-negative.  We now divide  (\ref{eq:binomial-sum-formula}) by $n!x!y!/(x+y+n+1)!$ to obtain the following identity:
\begin{equation} \label{eq:beta-function-general}
\sum_{k=0}^n \binom{n}{n-k}B(x+k+1,y+n-k+1)=B(x+1,y+1).
\end{equation}
But (\ref{eq:beta-function-general}) is a generalization of the following classical property of the beta function:
\begin{equation} \label{eq:beta-function-property}
B(x,y-1)+B(x-1,y) =B(x-1,y-1).
\end{equation}
It is now straightforward to prove (\ref{eq:beta-function-general}) using an induction argument.
\end{proof}

\begin{lemma}\label{le:binomial-sum-of-product-2} Let $p$ and $q$ be positive integers with $q \leq p$.  Then
\begin{equation}\label{eq:binomial-sum-formula-2}
\sum_{v=q}^{p} \binom{x+p-v-1}{p-v} \binom{y+v-q-1}{v-q} =\binom{x+y+p-q-1}{p-q} .
\end{equation}
\end{lemma}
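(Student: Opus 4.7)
The plan is to deduce this identity from Gould's identity (Lemma~\ref{le:binomial-sum-of-product}) by a straightforward change of variables, exactly as the excerpt promises. No new combinatorial or analytic work is needed; the content of the lemma is just Gould's formula rewritten so that the binomial coefficients have the shape that naturally appears in the entries of $S_{b,N}(x)$ (upper-triangular offsets $p-v$ and $v-q$ rather than $k$ and $n-k$).

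Concretely, I would first set $n = p - q$ and substitute $k = v - q$ in the left-hand side of \eqref{eq:binomial-sum-formula-2}. The summation index $v$ then ranges from $q$ to $p$ precisely as $k$ ranges from $0$ to $n$, and one checks $p - v = n - k$, so the sum becomes
\[
\sum_{k=0}^{n} \binom{x + n - k - 1}{n - k} \binom{y + k - 1}{k}.
\]
Reindexing by $k \mapsto n - k$ (which is a symmetry of the summation range) rearranges this as
\[
\sum_{k=0}^{n} \binom{x + k - 1}{k} \binom{y + n - k - 1}{n - k}.
\]

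Now I would apply Gould's identity \eqref{eq:binomial-sum-formula} with $x$ replaced by $x - 1$ and $y$ replaced by $y - 1$. The left-hand side of Gould's formula becomes exactly the sum displayed above, while the right-hand side becomes $\binom{(x-1) + (y-1) + n + 1}{n} = \binom{x + y + n - 1}{n}$. Substituting back $n = p - q$ gives $\binom{x + y + p - q - 1}{p - q}$, which is the desired right-hand side of \eqref{eq:binomial-sum-formula-2}.

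There is no real obstacle here; the only thing to be careful about is bookkeeping of the $\pm 1$ shifts when passing from the $\binom{x+k}{k}$ form in Gould to the $\binom{x+k-1}{k}$ form needed in the lemma, together with the harmless reindexing $k \leftrightarrow n-k$. A quick sanity check in a small case (e.g., $p = 2$, $q = 1$) confirms both sides evaluate consistently, which gives me confidence that the shifts have been tracked correctly.
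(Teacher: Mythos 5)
Your proof is correct and follows essentially the same route as the paper: the paper also reduces \eqref{eq:binomial-sum-formula-2} to Gould's identity via the substitution $w = v - q$ (your $k = v-q$, $n = p-q$), leaving the shift $x \mapsto x-1$, $y \mapsto y-1$ implicit where you spell it out. Nothing is missing.
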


\begin{proof}
Set $w=v-q$.  Then (\ref{eq:binomial-sum-formula-2}) can be rewritten as
\[
\sum_{w=0}^{p-q} \binom{x+p-q-w-1}{p-q-w} \binom{y+w-1}{w} =\binom{x+y+p-q-1}{p-q},
\]
which follows from Lemma \ref{le:binomial-sum-of-product}.
\end{proof}

\begin{theorem} For all $N\in \mathbb{N}$, 
 \begin{equation}\label{eq:one-parameter}
 S_{b,N}(x)S_{b,N}(y)=S_{b,N}(x+y).
 \end{equation}
\end{theorem}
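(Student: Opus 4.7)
The plan is to induct on $N$, with the real content concentrated in the base case $N=1$; the inductive step then comes essentially for free from the mixed-product property of the Kronecker product.

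For the base case, observe that $S_{b,1}(x)$ is lower triangular, hence so is the product, and the $(j,k)$-entry vanishes for $j<k$ on both sides. For $j\geq k$, the $(j,k)$-entry of $S_{b,1}(x)S_{b,1}(y)$ is
\[
\sum_{v=0}^{b-1}\binom{x+j-v-1}{j-v}\binom{y+v-k-1}{v-k}.
\]
Terms with $v>j$ drop out because $\binom{x+j-v-1}{j-v}=0$ (negative lower index), and terms with $v<k$ drop out because $\binom{y+v-k-1}{v-k}=0$; the surviving sum runs from $v=k$ to $v=j$, and Lemma \ref{le:binomial-sum-of-product-2} (with $p=j$, $q=k$) collapses it to $\binom{x+y+j-k-1}{j-k}$, which is exactly the $(j,k)$-entry of $S_{b,1}(x+y)$. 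The boundary values $v=j$ and $v=k$ cause no trouble since $\binom{x-1}{0}=\binom{y-1}{0}=1$.

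For the inductive step, suppose (\ref{eq:one-parameter}) holds for $N$. Using the mixed-product identity $(A\otimes B)(C\otimes D)=(AC)\otimes(BD)$ together with the recursive definition $S_{b,N+1}(z)=S_{b,1}(z)\otimes S_{b,N}(z)$,
\[
S_{b,N+1}(x)S_{b,N+1}(y)=\bigl(S_{b,1}(x)S_{b,1}(y)\bigr)\otimes\bigl(S_{b,N}(x)S_{b,N}(y)\bigr).
\]
The base case collapses the first factor to $S_{b,1}(x+y)$ and the inductive hypothesis collapses the second to $S_{b,N}(x+y)$, giving $S_{b,1}(x+y)\otimes S_{b,N}(x+y)=S_{b,N+1}(x+y)$ as required.

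The main obstacle is therefore purely the base case, which is just the Vandermonde-type convolution of Lemma \ref{le:binomial-sum-of-product-2}; once that is in hand, the Kronecker recursion propagates the one-parameter subgroup structure from dimension $b$ up to dimension $b^N$ with no further calculation. The determinant-one assertion (membership in $SL(b^N,\mathbb{R})$) is automatic, since $S_{b,N}(x)$ is lower triangular with $1$'s on the diagonal.
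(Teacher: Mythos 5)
Your proof is correct, and the inductive step takes a genuinely cleaner route than the paper's. The base case is essentially the same in both: the paper invokes Gould's identity (Lemma \ref{le:binomial-sum-of-product}) directly, while you spell out the entry computation and close it with the equivalent Lemma \ref{le:binomial-sum-of-product-2} at $p=j$, $q=k$; either way the real content is the Vandermonde-type convolution. Where you diverge is the inductive step: the paper expands $t_{N+1}(j,k)$ as a double sum over the block index $v$ and the within-block index $r$, factors it by hand into $\bigl[\sum_v \binom{x+p-v-1}{p-v}\binom{y+v-q-1}{v-q}\bigr]\cdot\sum_r \alpha_N(j',r,x)\alpha_N(r,k',y)$, and then applies Lemma \ref{le:binomial-sum-of-product-2} again at the block level together with the inductive hypothesis --- in effect re-deriving the mixed-product property $(A\otimes B)(C\otimes D)=(AC)\otimes(BD)$ in this special case. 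You invoke that property once and reduce the entire inductive step to the base case, which is shorter and isolates where the arithmetic actually lives; the paper itself uses the same mixed-product trick later (for $U_{N+1}=U_1\otimes U_N$), so nothing outside its toolkit is needed. What the paper's longer computation buys is that the explicit double sum over $v$ and $r$ mirrors the digit-by-digit decomposition $0\le m\preceq n$ underlying Theorem \ref{th:digital-binomial-theorem-non-binary}, which it then reads off as a corollary; your argument proves the same matrix identity with less work but leaves that combinatorial decomposition implicit. One cosmetic point: Lemma \ref{le:binomial-sum-of-product-2} is stated for positive $p,q$, whereas you (and the paper) apply it with indices that may be $0$; this is harmless since the proof of that lemma only needs $p-q\ge 0$, but it is worth a remark.
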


\begin{proof}  We argue by induction on $N$.  Lemma \ref{le:binomial-sum-of-product} proves that (\ref{eq:one-parameter}) holds for $S_{b,1}(x)$.  Next, assume that (\ref{eq:one-parameter}) holds for $S_{b,N}(x)$.   Let $t_{N+1}(j,k)$ denote the $(j,k)$-entry of $S_{b,N+1}(x)S_{b,N+1}(y)$.  Then
\begin{align*}
t_{N+1}(j,k) &=\sum_{m=0}^{b^{N+1}-1} \alpha_{N+1}(j,m,x)\alpha_{N+1}(m,k,y) \\
&=\sum_{v=0}^{b-1}\sum_{r=0}^{b^{N}-1} \alpha_{N+1}(j,vb^N+r,x)\alpha_{N+1}(vb^N+r,k,y).
\end{align*}
As before, assume $pb^N\leq j \leq (p+1)b^N-1$ and $qb^N \leq k \leq (q+1)b^N-1$ for some non-negative integers $p,q\in \{0,1,\ldots,b-1\}$.  Set $j'=j-pb^N$ and $k'=k-qb^N$.  Again, we consider two cases:
\\

\noindent Case 1: $j<k$.  Then $\alpha_{N+1}(j,k,x+y)=0$ by definition.  On the other hand, we have
\begin{align*}
t_{N+1}(j,k) & =\sum_{m=0}^{j} \alpha_{N+1}(j,m,x)\alpha_{N+1}(m,k,y)+\sum_{m=j+1}^{b^{N+1}-1} \alpha_{N+1}(j,m,x)\alpha_{N+1}(m,k,y) \\
& =\sum_{m=0}^{j} \alpha_{N+1}(j,m,x)\cdot 0+\sum_{m=j+1}^{b^{N+1}-1} 0\cdot \alpha_{N+1}(m,k,y) \\
& = 0
\end{align*}
and thus (\ref{eq:one-parameter}) holds.
\\

\noindent Case 2: $j\geq k$.  Since $S_{b,N+1}(x)  = S_{b,1}(x)\otimes S_{b,N}(x)$, we have
\[
\alpha_{N+1}(j,vb^N+r,x)=\left\{\begin{array}{cl}
\binom{x+p-v-1}{p-v}\alpha_{N}(j',r,x) & \textrm{if} \ j\geq vb^N+r, \\
0 & \textrm{if} \ j< vb^N+r. \\
\end{array}
\right.
\]
Similarly, we have
\[
\alpha_{N+1}(vb^N+r,k,y)=\left\{\begin{array}{cl}
\binom{y+v-q-1}{v-q}\alpha_{N}(r,k',y) & \textrm{if} \  k\leq vb^N+r, \\
0 & \textrm{if} \ k > vb^N+r. \\
\end{array}
\right.
\]
It follows that
\begin{align*}
t_{N+1}(j,k) 
&=\sum_{v=q}^{p}\sum_{r=0}^{b^{N}-1} \binom{x+p-v-1}{p-v} \binom{y+v-q-1}{v-q} \alpha_{N}(j',r,x)\alpha_{N}(r,k',y) \\
&=\left[\sum_{v=q}^{p} \binom{x+p-v-1}{p-v} \binom{y+v-q-1}{v-q}\right] \sum_{r=0}^{b^{N}-1} \alpha_{N}(j',r,x)\alpha_{N}(r,k',y) \\
& = \binom{x+y+p-q-1}{p-q}  \alpha_{N}(j',k',x+y) \\
& = \alpha_{N+1}(j,k,x+y),
\end{align*}
where we have made use of the inductive assumption and Lemma \ref{le:binomial-sum-of-product-2}.  This proves that (\ref{eq:one-parameter}) holds.
\end{proof}

As a corollary, we obtain Theorem \ref{th:digital-binomial-theorem-non-binary}, which we now prove.

\begin{proof}[Proof of Theorem \ref{th:digital-binomial-theorem-non-binary}]
Let $j=n$ and $k=0$.  Then the identity
\[
\sum_{m=0}^{b^{N}-1} \alpha_{N}(j,m,x)\alpha_{N}(m,k,y) = \alpha_{N}(j,k,x+y),
\]
which follows from (\ref{eq:one-parameter}), is equivalent to (\ref{eq:digital-binomial-theorem-non-binary}).
\end{proof} 

We end this section by describing the infinitesimal generator of $S_{b,N}(x)$.  Define $X_{b,1}(x)=(\chi_{j,k})$ to be a strictly lower-triangular matrix whose entries $\chi_{j,k}$ are given by
\begin{equation}
\chi_{j,k}=\left\{
\begin{array}{ll}
x/(j-k), & \textrm{if } j\geq k+1 \\
0, & \textrm{otherwise.} \\
\end{array}
\right.
\end{equation}
For $N>1$, we define matrices
\[X_{b,N+1}(x)=X_{b,1}(x)\oplus X_{b,N}(x)=X_{b,1}(x)\otimes I_{b^N} +I_b\otimes X_{b,N}(x),
\]
where $\oplus$ denotes the Kronecker sum and $I_{b^N}$ denotes the $b^N\times b^N$ identity matrix.  Observe that $X_{b,1}(x)$ has the following matrix form:
\begin{equation}
X_{b,1}(x)=\left(
\begin{array}{ccccc}
0 & 0 & 0 & \ldots & 0 \\
x & 0 & 0 & \ldots & 0 \\
x/2 & x & 0 & \ldots & 0 \\
x/3 & x/2 & x & \ldots & 0 \\
\vdots & \vdots & \vdots & \ddots & \vdots \\
x/(b-1) & x/(b-2) & x/(b-3) & \ldots & 0
\end{array}
\right)
\end{equation}

The following lemmas will be needed.  The first states a useful identity involving the unsigned Stirling numbers of the first kind, $c(n,k)$, defined by the generating function
\[
x(x+1)...(x+n-1)=\sum_{k=0}^n c(n,k) x^k.
\]
It is well known that $c(n,k)$ counts the number of $n$-element permutations consisting of $k$ cycles.

\begin{lemma} \label{le:stirling-number-formula}
Let $l$ and $n$ be positive integers with $l\geq n$.  Then
\begin{equation} \label{eq:stirling-number-formula}
\sum_{i=1}^{l-n+1}(i-1)!\binom{l}{i}c(l-i,n-1)=n c(l,n).
\end{equation}
\end{lemma}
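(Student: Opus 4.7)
The plan is to give a combinatorial proof exploiting the classical fact that $c(l,n)$ counts the permutations of $\{1,2,\ldots,l\}$ having exactly $n$ cycles. I will interpret both sides of \eqref{eq:stirling-number-formula} as the cardinality of a common set, namely the collection of \emph{marked permutations}: pairs $(\sigma,C)$ where $\sigma$ is a permutation of $\{1,\ldots,l\}$ with exactly $n$ cycles and $C$ is one designated cycle of $\sigma$.

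The right-hand side is immediate: each $\sigma$ counted by $c(l,n)$ has $n$ cycles and hence contributes $n$ choices of $C$, giving $n\,c(l,n)$ marked permutations in total.

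For the left-hand side, I would stratify marked permutations by the size $i=|C|$ of the distinguished cycle. Because $C$ must contain at least one element while the remaining $n-1$ cycles must together contain at least $n-1$ elements, the index $i$ automatically ranges over $\{1,2,\ldots,l-n+1\}$, matching the summation bounds exactly. For each fixed $i$ I build a marked permutation in three independent steps: choose the underlying $i$-element set of $C$ in $\binom{l}{i}$ ways; arrange those elements into a single cycle in $(i-1)!$ ways; and distribute the remaining $l-i$ elements into $n-1$ cycles in $c(l-i,n-1)$ ways. Multiplying and summing over $i$ reproduces the left-hand side of \eqref{eq:stirling-number-formula}, and equating the two counts completes the proof.

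I do not anticipate a serious obstacle here; the only point requiring a sentence of care is that the index bounds $1\leq i\leq l-n+1$ are precisely those forced by the cycle interpretation, so no extraneous zero terms must be inserted or removed. An alternative algebraic route would begin from the exponential generating function $\sum_{l\geq n} c(l,n)\,x^l/l! = (1/n!)(\log\frac{1}{1-x})^n$ and extract coefficients after differentiation, but that argument is less transparent, so I would present the bijective proof above.
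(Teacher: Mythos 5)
Your proof is correct and is essentially identical to the paper's own argument: both count permutations of $\{1,\ldots,l\}$ with $n$ cycles and one distinguished cycle, obtaining $n\,c(l,n)$ directly and the left-hand sum by stratifying on the size $i$ of the distinguished cycle. Your remark that the bounds $1\le i\le l-n+1$ are exactly those forced by the cycle structure is a small but welcome addition of care over the paper's version.
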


\begin{proof} We give a combinatorial argument.  Let $A=\{1,2,...,l\}$.  We count in two different ways the number of permutations $\pi=\sigma_1\sigma_2 \cdots \sigma_n$ of $A$ consisting of $n$ cycles where we distinguish one of the cycles $\sigma_j$ of $\pi$.  On the one hand, since there are $c(l,n)$ such permutations $\pi$ and $n$ ways to distinguish a cycle of $\pi$, it follows that the answer is given by $n c(l,n)$.  On the other hand, we can construct $\pi$ by first choosing our distinguished cycle $\sigma_1$ consisting of $i$ elements.  The number of possibilities for $\sigma_1$ is $\binom{l}{i}(i-1)!$ since there are $\binom{l}{i}$ ways to choose $i$ elements from $A$ and $(i-1)!$ ways to construct a cycle from these $i$ elements.  It remains to construct the remaining cycles $\sigma_2,\cdots, \sigma_n$, which we view as a permutation $\pi'=\sigma_2\cdots \sigma_n$ on $l-i$ elements consisting of $n-1$ cycles.  Since there are $c(l-i,n-1)$ such possibilities for $\pi'$, it follows that the number of permutations $\pi$ with a distinguished cycle is given by 
\[\sum_{i=1}^{l-n+1}(i-1)!\binom{l}{i}c(l-i,n-1).
\]
Equating the two answers yields (\ref{eq:stirling-number-formula}) as desired.
\end{proof}

\begin{lemma} \label{le:infinitesimal-generator-power} Let $n$ be a positive integer with $1\leq n \leq b-1$.  Then
\begin{equation} \label{eq:infinitesimal-generator-power}
X_{b,1}^n(x)=(\chi_n(j,k)),
\end{equation}
where the entries $\chi_n(j,k)$ are given by
\begin{equation}  \label{eq:infinitesimal-generator-entry}
\chi_n(j,k)=\left\{
\begin{array}{cl}
 \frac{n!}{(j-k)!}c(j-k,n)x^n, & \textrm{if } j\geq k+n \\
0, & \textrm{otherwise.} \\
\end{array}
\right.
\end{equation}

\end{lemma}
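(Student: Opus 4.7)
The plan is to proceed by induction on $n$. For the base case $n = 1$, we use the standard fact $c(m,1) = (m-1)!$ (the number of cyclic permutations on $m$ elements), which gives $\frac{1!}{(j-k)!}\, c(j-k,1)\, x = \frac{x}{j-k}$ for $j \geq k+1$, matching the defining entries of $X_{b,1}(x)$.

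For the inductive step, suppose the formula holds for some $n$ with $1 \leq n \leq b-2$, and compute the $(j,k)$ entry of $X_{b,1}^{n+1}(x) = X_{b,1}^n(x)\, X_{b,1}(x)$ via
\[
\chi_{n+1}(j,k) = \sum_{i} \chi_n(j,i)\, \chi_{i,k}.
\]
The support conditions force $k+1 \leq i \leq j-n$, and substituting $m = j-i$ (so that $i-k = l-m$, where $l := j-k$) reshapes the sum into
\[
x^{n+1} \sum_{m=n}^{l-1} \frac{n!\, c(m,n)}{m!\,(l-m)}.
\]
We want this to equal $\frac{(n+1)!}{l!}\, c(l, n+1)\, x^{n+1}$. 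After clearing the factor $x^{n+1}$, multiplying through by $l!/n!$, and invoking the factorial rewrite $\frac{l!}{m!(l-m)} = \binom{l}{m}(l-m-1)!$, the target identity becomes
\[
\sum_{m=n}^{l-1} \binom{l}{m}(l-m-1)!\, c(m,n) = (n+1)\, c(l, n+1).
\]
Reindexing with $i = l-m$ (so that $i$ ranges from $1$ to $l-n$) turns the left-hand side into exactly the sum in Lemma \ref{le:stirling-number-formula} with $n$ replaced by $n+1$, which delivers the required equality.

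The main obstacle is bookkeeping: identifying the two changes of variable ($m = j-i$ followed by $i = l - m$) together with the factorial manipulation that converts the matrix-product convolution into the precise shape of Lemma \ref{le:stirling-number-formula}. Everything else is routine. Note also that the hypothesis $1 \leq n \leq b-1$ in the statement is essentially tight for a nonvacuous conclusion: for $n \geq b$ the condition $j \geq k+n$ cannot be met by row/column indices of the $b \times b$ matrix $X_{b,1}(x)$, so $X_{b,1}^n(x)$ would vanish identically.
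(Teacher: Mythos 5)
Your proof is correct and follows essentially the same route as the paper's: induct on $n$, expand the $(j,k)$ entry of the matrix product as a convolution over the intermediate index, reindex, and apply Lemma \ref{le:stirling-number-formula}. The only differences are cosmetic --- you write $X_{b,1}^{n+1}=X_{b,1}^{n}X_{b,1}$ where the paper writes $X_{b,1}^{n}=X_{b,1}^{n-1}X_{b,1}$, and your two substitutions $m=j-i$ and $i=l-m$ compose to the paper's single shift $i\mapsto i-k$.
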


\begin{proof} We argue by induction on $n$.  It is clear that (\ref{eq:infinitesimal-generator-entry}) holds when $n=1$.  Suppose $n>1$.  If $j< k+n$, then $\chi_n(j,k)=0$ because $X_{b,1}^m$ is a power of strictly lower-triangular matrices.  Therefore, assume $j\geq k+n$.  Then
\begin{align*}
\chi_n(j,k) & =\sum_{i=0}^{b-1}\chi_{n-1}(j,i)\chi_1(i,k) = \sum_{i=k+1}^{j-n+1}\chi_{n-1}(j,i)\chi_1(i,k)  \\
& =x^n \sum_{i=k+1}^{j-n+1}\frac{(n-1)!}{(j-i)!}c(j-i,n-1)\frac{1}{i-k} \\
& = \frac{(n-1)! x^n}{l!} \sum_{i=1}^{l-n+1}(i-1)!\binom{l}{i}c(l-i,n-1),
\end{align*}
where $l=j-k$.  It follows from Lemma \ref{le:stirling-number-formula} that
\begin{align*}
\chi_n(j,k)=\frac{(n-1)!x^n}{l!}\cdot n c(l,n)=\frac{n!}{(j-k)!}c(j-k,n)x^n
\end{align*}
as desired.
\end{proof}

\begin{lemma}  \label{le:infinitesimal-generator-base-case} We have
\begin{equation} \label{eq:infinitesimal-generator-base-case}
\mathrm{exp}(X_{b,1}(x))=S_{b,1}(x).
\end{equation}
\end{lemma}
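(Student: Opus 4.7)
The plan is to compute $\exp(X_{b,1}(x))$ entrywise using the closed form for the powers $X_{b,1}^n(x)$ already established in Lemma \ref{le:infinitesimal-generator-power}, and then collapse the resulting sum using the generating function definition of the unsigned Stirling numbers of the first kind.

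First I would observe that $X_{b,1}(x)$ is a $b\times b$ strictly lower-triangular matrix, so $X_{b,1}^n(x)=0$ for all $n\geq b$. Consequently the exponential series
\[
\exp(X_{b,1}(x))=\sum_{n=0}^{\infty}\frac{1}{n!}X_{b,1}^n(x)
\]
truncates at $n=b-1$, which both justifies the manipulations below and ensures convergence is a non-issue.

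Next I would fix indices $j,k$ with $0\leq k\leq j\leq b-1$, set $l=j-k$, and read off the $(j,k)$-entry of $\exp(X_{b,1}(x))$. When $j=k$, the only surviving term is the $n=0$ term (the identity), giving $1=\binom{x-1}{0}$, which matches the diagonal of $S_{b,1}(x)$. When $j>k$, only the terms with $1\leq n\leq l$ contribute, and by Lemma \ref{le:infinitesimal-generator-power} each such term equals
\[
\frac{1}{n!}\cdot\frac{n!}{l!}\,c(l,n)\,x^n=\frac{c(l,n)\,x^n}{l!}.
\]
Summing over $n$ gives
\[
[\exp(X_{b,1}(x))]_{j,k}=\frac{1}{l!}\sum_{n=1}^{l}c(l,n)\,x^n.
\]

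Finally I would invoke the defining generating function
\[
x(x+1)(x+2)\cdots(x+l-1)=\sum_{n=0}^{l}c(l,n)\,x^n,
\]
together with $c(l,0)=0$ for $l\geq 1$, to conclude
\[
[\exp(X_{b,1}(x))]_{j,k}=\frac{x(x+1)\cdots(x+l-1)}{l!}=\binom{x+l-1}{l}=\binom{x+j-k-1}{j-k},
\]
which is precisely the $(j,k)$-entry of $S_{b,1}(x)$. Since this matches on and below the diagonal and both matrices vanish strictly above, the two matrices are equal. There is no real obstacle here beyond correctly identifying the Stirling number generating function in the collapsed sum; all of the combinatorial work has already been done in the lemmas.
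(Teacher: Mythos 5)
Your proposal is correct and follows essentially the same route as the paper's proof: both expand the (truncated, by nilpotency) exponential series entrywise using the formula for $\chi_n(j,k)$ from Lemma \ref{le:infinitesimal-generator-power} and then collapse $\frac{1}{(j-k)!}\sum_n c(j-k,n)x^n$ via the Stirling-number generating function to obtain $\binom{x+j-k-1}{j-k}$. The only cosmetic difference is that you sum $n$ up to $l=j-k$ while the paper sums up to $b-1$, which agrees since the extra terms vanish.
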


\begin{proof} Denote the entries of $\mathrm{exp}(X_{b,1}(x))$ by $\xi(j,k)$.  It is clear that $\xi(j,k)=0$ for $j<k$ and $\xi(j,k)=1$ for $j=k$ since $X_{b,1}$ is strictly lower triangular.
Therefore, assume $j\geq k+1$.  Since $X_{b,1}^{n}=0$ for $n\geq b$, we have
\begin{align*}
\xi(j,k) & =\sum_{n=1}^{b-1}\frac{\chi_n(j,k)}{n!} \\
&  =\frac{1}{(j-k)!}\sum_{n=1}^{b-1}c(j-k,n) x^n \\
& =\frac{ x(x+1)\cdots (x+j-k-1) }{(j-k)!} \\
& =\binom{x+j-k-1}{j-k} \\
& = \alpha_1(j,k).
\end{align*}
Thus, (\ref{eq:infinitesimal-generator-base-case}) holds.
\end{proof}

\begin{theorem} Let $N$ be a positive integer.  Then
\begin{equation}\label{eq:infinitesimal-generator}
\mathrm{exp}(X_{b,N}(x))=S_{b,N}(x).
\end{equation}
\end{theorem}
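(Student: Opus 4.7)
The plan is to induct on $N$. The base case $N=1$ is exactly Lemma \ref{le:infinitesimal-generator-base-case}, so all the work lies in the inductive step, which will be driven by the standard matrix-exponential identity for a Kronecker sum, namely
\[
\exp(A \oplus B) = \exp(A) \otimes \exp(B).
\]
This identity is what makes the recursive definitions $X_{b,N+1}(x) = X_{b,1}(x) \oplus X_{b,N}(x)$ and $S_{b,N+1}(x) = S_{b,1}(x) \otimes S_{b,N}(x)$ play together so cleanly.

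To justify the Kronecker-sum identity within the proof, I would first record the two elementary observations that $(A \otimes I_{b^N})$ and $(I_b \otimes B)$ commute, since
\[
(A \otimes I_{b^N})(I_b \otimes B) = A \otimes B = (I_b \otimes B)(A \otimes I_{b^N}),
\]
and that $(A \otimes I_{b^N})^k = A^k \otimes I_{b^N}$, which implies $\exp(A \otimes I_{b^N}) = \exp(A) \otimes I_{b^N}$, and similarly $\exp(I_b \otimes B) = I_b \otimes \exp(B)$. Because commuting matrices satisfy $\exp(M_1 + M_2) = \exp(M_1)\exp(M_2)$, we obtain
\[
\exp(A \oplus B) = \exp(A \otimes I_{b^N})\exp(I_b \otimes B) = \bigl(\exp(A) \otimes I_{b^N}\bigr)\bigl(I_b \otimes \exp(B)\bigr) = \exp(A) \otimes \exp(B).
\]

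Applying this with $A = X_{b,1}(x)$ and $B = X_{b,N}(x)$, and invoking the inductive hypothesis $\exp(X_{b,N}(x)) = S_{b,N}(x)$ together with the base case $\exp(X_{b,1}(x)) = S_{b,1}(x)$, I conclude
\[
\exp(X_{b,N+1}(x)) = \exp(X_{b,1}(x)) \otimes \exp(X_{b,N}(x)) = S_{b,1}(x) \otimes S_{b,N}(x) = S_{b,N+1}(x),
\]
closing the induction. The only potential obstacle is whether the paper wishes to include a self-contained derivation of the Kronecker-sum exponential identity; since all the ingredients (commutativity of $A\otimes I$ and $I\otimes B$, and the power rule for Kronecker products) are completely routine, the entire argument reduces to the two displayed computations above.
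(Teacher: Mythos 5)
Your proof is correct and follows essentially the same route as the paper: induction on $N$ with the base case supplied by Lemma \ref{le:infinitesimal-generator-base-case} and the inductive step driven by the identity $\exp(A\oplus B)=\exp(A)\otimes\exp(B)$. The only difference is that you supply a short self-contained derivation of that Kronecker-sum identity, which the paper simply cites as known.
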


\begin{proof} We argue by induction on $N$.
Lemma (\ref{le:infinitesimal-generator-base-case}) shows that (\ref{eq:infinitesimal-generator}) is true for $N=1$.  Then since $\mathrm{exp}(A\oplus B)=\mathrm{exp}(A)\otimes \mathrm{exp}(B)$ for any two matrices $A$ and $B$, it follows that
\[
\mathrm{exp}(X_{b,N}(x))= \mathrm{exp}(X_{b,1}(x) \oplus X_{b,N-1}(x))=S_{b,1}(x)\otimes S_{b,N-1}(x)=S_{b,N}(x),
\]
which proves (\ref{eq:infinitesimal-generator}).
\end{proof}


\section{Prouhet-Thue-Morse Polynomials}

In this section we demonstrate how the generalized Sierpinski matrices $S_{b,N}(1)$ arise in the study of Prouhet-Thue-Morse polynomials, first investigated by the author in \cite{N2}.   These polynomials were used in the same paper to give a new proof of the well-known Prouhet-Tarry-Escott problem, which seeks $b\geq 2$ sets of non-negative integers $S_0$, $S_1$, \dots, $S_{b-1}$ that have equal sums of like powers up to degree $M\geq 1$, i.e.,
\[
\sum_{n\in S_0} n^m =\sum_{n\in S_1} n^m= \cdots =\sum_{n\in S_{b-1}} n^m 
\]
for all $m=0,1,\dots,M$.  In 1851, E. Prouhet \cite{P} gave a solution (but did not publish a proof; see Lehmer \cite{L}) by partitioning the first $b^{M+1}$ non-negative integers into the sets $S_0,S_1,\ldots,S_{b-1}$ according to the assignment
\[
n\in S_{u_b(n)}.
\]
Here, $u_b(n)$ is the generalized Prouhet-Thue-Morse sequence, defined as the residue of  the sum of digits of $n$ (base $b$):
\[
u_b(n)=\sum_{j=0}^d n_j \mod b,
\]
where $n=n_db^d+\dots+n_0b^0$ is the base-$b$ expansion of $n$.  When $b=2$, $u(n):=u_2(n)$ generates the classical Prouhet-Thue-Morse sequence: $0,1,1,0,1,0,0,1,\ldots$. 

Let $A=(a_0,a_1,\dots,a_{b-1})$ be a \textit{zero-sum vector}, i.e., an ordered collection of $b$ arbitrary complex values that sum to zero:
\[
a_0+a_1+\dots+a_{b-1}=0.
\]
We define $F_N(x;A)$ to be the \textit{Prouhet-Thue-Morse} (PTM) polynomial of degree $b^N-1$ whose coefficients belong to $A$ and repeat according to $u_b(n)$, i.e.,
\begin{equation} \label{de:F}
F_N(x;A)=\sum_{n=0}^{b^{N}-1} a_{u_b(n)}x^n.
\end{equation}
In the case where $b=2$, $a_0=1$, and $a_1=-1$, we obtain the classic product generating function formula
\begin{equation} \label{eq:classic-product-generating-function}
 \prod_{m=0}^{N} (1-x^{2^m}) =\sum_{n=0}^{2^{N+1}-1} (-1)^{u(n)}x^n.
\end{equation}
Lehmer generalized this formula to the case where $A=(1,\omega,\omega^2,\ldots, \omega^{b-1})$ consists of all $b$-th roots of unity with $\omega=e^{i2\pi/b}$.  The following theorem, proven in \cite{N2}, extends this factorization to $F_N(x;A)$ for arbitrary zero-sum vectors.

\begin{theorem}[\cite{N2}] \label{th:factor-F}
Let $N$ be a positive integer and $A$ a zero-sum vector.  There exists a polynomial $P_N(x)$ such that
\begin{equation} \label{eq:ptm-polynomial}
F_N(x;A)=P_{N}(x)\prod_{m=0}^{N-1}(1-x^{b^m}).
\end{equation}
\end{theorem}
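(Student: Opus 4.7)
\medskip
\noindent\textbf{Proof plan.} My plan is to reduce (\ref{eq:ptm-polynomial}) to an explicit basis of zero-sum vectors on which $F_N(x;A)$ admits a product formula, and then extract the factor $\prod_{m=0}^{N-1}(1-x^{b^m})$ via a cyclotomic identity. Since $A\mapsto F_N(x;A)$ is $\mathbb{C}$-linear by (\ref{de:F}) and the zero-sum vectors form a $(b-1)$-dimensional subspace of $\mathbb{C}^{b}$, it suffices to verify (\ref{eq:ptm-polynomial}) on any basis of that subspace. For the basis I would take the non-trivial additive characters of $\mathbb{Z}/b\mathbb{Z}$: let $\omega=e^{2\pi i/b}$ and, for $k=1,\ldots,b-1$, set $A_{k}=(1,\omega^{k},\omega^{2k},\ldots,\omega^{(b-1)k})$. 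Each $A_{k}$ is zero-sum because $\sum_{j=0}^{b-1}\omega^{jk}=0$ whenever $b\nmid k$, and the $A_{k}$ are linearly independent by a Vandermonde argument, so they span the zero-sum subspace.

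For a fixed $k\in\{1,\ldots,b-1\}$ the key simplification is that $u_{b}(n)\equiv\sum_{j}n_{j}\pmod{b}$, so (since $\omega^{b}=1$) $\omega^{k\,u_{b}(n)}=\prod_{j=0}^{N-1}\omega^{kn_{j}}$, which makes $\omega^{k\,u_{b}(n)}x^{n}=\prod_{j=0}^{N-1}(\omega^{k}x^{b^{j}})^{n_{j}}$ completely multiplicative across the base-$b$ digits of $n$. Summing over all digit tuples yields
\begin{equation*}
F_{N}(x;A_{k})=\prod_{j=0}^{N-1}\sum_{s=0}^{b-1}(\omega^{k}x^{b^{j}})^{s}=\prod_{j=0}^{N-1}\frac{1-x^{b^{j+1}}}{1-\omega^{k}x^{b^{j}}},
\end{equation*}
and the cyclotomic identity $1-y^{b}=\prod_{l=0}^{b-1}(1-\omega^{l}y)$, applied with $y=x^{b^{j}}$, cancels the denominator to give
\begin{equation*}
F_{N}(x;A_{k})=\prod_{j=0}^{N-1}\prod_{\substack{0\leq l\leq b-1\\ l\neq k}}(1-\omega^{l}x^{b^{j}}).
\end{equation*}
Because $k\neq 0$, the factor for $l=0$ appears in every inner product and contributes $1-x^{b^{j}}$; extracting these produces the claimed factorization with $P_{N}^{(k)}(x)=\prod_{j=0}^{N-1}\prod_{1\leq l\leq b-1,\,l\neq k}(1-\omega^{l}x^{b^{j}})$.

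Finally, expanding an arbitrary zero-sum $A=\sum_{k=1}^{b-1}c_{k}A_{k}$ in this basis and setting $P_{N}(x)=\sum_{k=1}^{b-1}c_{k}P_{N}^{(k)}(x)$ would yield (\ref{eq:ptm-polynomial}) by linearity. The conceptual content is that the characters of $\mathbb{Z}/b\mathbb{Z}$ diagonalize the map $A\mapsto F_{N}(x;A)$, and the zero-sum hypothesis on $A$ is precisely what excludes the trivial character ($k=0$); that exclusion is exactly what guarantees the factor $\prod_{m=0}^{N-1}(1-x^{b^{m}})$. There is no serious obstacle, and the only mildly delicate step is the cyclotomic factorization together with recognizing that it is the $l=0$ term that must be pulled out from each inner product.
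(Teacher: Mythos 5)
Your proof is correct; every step checks out. The map $A\mapsto F_N(x;A)$ is linear in $A$ by \eqref{de:F}; the characters $A_k=(1,\omega^k,\ldots,\omega^{(b-1)k})$, $1\le k\le b-1$, are $b-1$ independent zero-sum vectors and hence a basis of the zero-sum subspace; the digit-multiplicativity of $\omega^{k\,u_b(n)}x^n$ turns the sum over $n$ into a product of geometric series; and the factorization $1-x^{b^{j+1}}=\prod_{l=0}^{b-1}(1-\omega^l x^{b^j})$ together with $k\neq 0$ leaves the $l=0$ factor $1-x^{b^j}$ intact in each inner product, so the degrees also match up ($\deg P_N^{(k)}=(b-2)(b^N-1)/(b-1)$). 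The route is, however, genuinely different from the one the paper is organized around. The paper quotes the theorem from \cite{N2} and, in Section 3, effectively reproves it through the Sierpinski-matrix formalism: Theorem~\ref{th:matrix-equation} shows that \eqref{eq:ptm-polynomial} is equivalent to the linear system $\mathbf{a}_N=M_N\mathbf{c}_N$ plus a vanishing condition on certain $c_n$, and Theorem~\ref{th:formula-coefficients} solves it by inverting $M_N$ with $S_N=S_{b,N}(1)$, giving the explicit coefficients $c_n=\sum_{k\preceq n}a_{u_b(k)}$ and deducing the vanishing condition from the zero-sum hypothesis. That approach buys an explicit combinatorial formula for $P_N$ in terms of the digital dominance order, which is what the rest of the paper actually uses. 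Your approach instead diagonalizes $A\mapsto F_N(x;A)$ by the characters of $\mathbb{Z}/b\mathbb{Z}$ and reduces the general case to Lehmer's product formula (the special case $A=(1,\omega,\ldots,\omega^{b-1})$ mentioned just before the theorem statement); it is shorter, and it makes transparent that the zero-sum hypothesis is exactly the exclusion of the trivial character, which is what produces the factor $\prod_{m=0}^{N-1}(1-x^{b^m})$. The trade-off is that your $P_N=\sum_k c_k P_N^{(k)}$ is less directly connected to the digital structure of the coefficients that Section 3 goes on to exploit.
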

Theorem \ref{th:factor-F} is useful in that it allows us to establish that the polynomial $F_N(x,A)$ has a zero of order $N$ at $x=1$, from which Prouhet's solution follows easily by setting $N=M+1$ and differentiating $F_N(x;A)$ $m$ times (see \cite{N2}).  

We now derive formulas for the coefficients of $P_N(x)$ in terms of generalized Sierpinski triangles.  Towards this end, let
\[
P_N(x;C_N)=\sum_{n=0}^{b^N-1}c_nx^n
\]
denote a polynomial whose coefficients are given by the column vector $\mathbf{c}_N=(c_0,...,c_{b^N-1})^T$.  Also, let 
\[
\mathbf{a}_n=(a_{u_b(0)},a_{u_b(1)},\ldots,a_{u_b(b^N-1)})^T
\]
 be a column vector consisting of elements of $A$ generated by the PTM sequence $u_b(n)$.  Next, define a sequence of $b^N \times b^N$ matrices $M_N$ recursively by
\[
M_1=\left(
\begin{array}{rrrrrr}
1 & 0 & 0 & \ldots & 0 & 0 \\
-1 & 1 & 0 & \ldots & 0 & 0 \\
& & & \ddots \\
0 & 0 & 0 & \ldots & -1 & 1
\end{array}
\right)
\]
and for $N>1$,
\begin{equation}
M_{N+1}= M_1\otimes M_N =
\left(
\begin{array}{rrrrrr}
M_N & 0_N & 0_N &  \ldots & 0_N &  0_N \\
0_N & -M_N & M_N &  \ldots & 0_N & 0_N \\
& & &  \ddots  \\
0_N & 0_N  & 0_N & \ldots & -M_N & M_N \\
\end{array}
\right),
\end{equation}
where $M_1 \otimes M_N$ denotes the Kronecker product. 
The following theorem establishes a matrix relationship between the vectors $\mathbf{a}_N$ and $\mathbf{c}_N$.

\begin{theorem} \label{th:matrix-equation}
Let $A=(a_0,\ldots, a_{b-1})$ be a zero-sum vector.  The polynomial equation 
\begin{equation} \label{eq:polynomial-factorization}
F_N(x;A)=P_N(x;C_N)\prod_{m=0}^{N-1}(1-x^{b^m})
\end{equation}
 is equivalent to the matrix equation
\begin{equation}\label{eq:matrix-equation}
\mathbf{a}_N=M_N\mathbf{c}_N
\end{equation}
together with the condition $c_n=0$ for any $n$ that contains the digit $b-1$ in its base-$b$ expansion, where $0\leq n\leq b^N-1$.
\end{theorem}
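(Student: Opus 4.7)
The plan is to proceed by induction on $N$. For the base case $N=1$, note that $u_b(n) = n$ for $0 \leq n \leq b-1$, so $F_1(x;A) = \sum_n a_n x^n$. Expanding $P_1(x;C_1)(1-x)$ yields the coefficient $c_n - c_{n-1}$ of $x^n$ for $0 \leq n \leq b-1$ (with $c_{-1}:=0$) and $-c_{b-1}$ for $x^b$. Matching with $F_1$ forces $c_{b-1}=0$ (the digit condition at level one) together with $a_n = c_n - c_{n-1}$, i.e., $\mathbf{a}_1 = M_1\mathbf{c}_1$.

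For the inductive step, the key decomposition is
\[
F_{N+1}(x;A) = \sum_{q=0}^{b-1} x^{qb^N} F_N(x;A_q),
\]
where $A_q = (a_q, a_{q+1 \bmod b}, \dots, a_{q+b-1 \bmod b})$ is the cyclic shift of $A$ by $q$ positions; this follows from $u_b(qb^N + r) \equiv q + u_b(r) \pmod b$. Each $A_q$ is zero-sum and, crucially, $\sum_{q=0}^{b-1} F_N(x;A_q) = 0$, since for every $r$ the inner sum $\sum_q a_{(q+u_b(r)) \bmod b}$ reduces to $\sum_j a_j = 0$. Correspondingly, I partition $\mathbf{c}_{N+1}$ into blocks $\mathbf{c}^{(0)}, \dots, \mathbf{c}^{(b-1)}$ of length $b^N$ and write $P_{N+1}(x) = \sum_q x^{qb^N} P^{(q)}(x)$. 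The digit condition at level $N+1$ splits as (i) $\mathbf{c}^{(b-1)} = 0$ and (ii) the level-$N$ digit condition on each $\mathbf{c}^{(q)}$ for $q \leq b-2$.

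The crux is the telescoping identity
\[
P_{N+1}(x)(1-x^{b^N}) = P^{(0)}(x) + \sum_{q=1}^{b-1} x^{qb^N}\bigl[P^{(q)}(x) - P^{(q-1)}(x)\bigr] - x^{b^{N+1}} P^{(b-1)}(x).
\]
Once (i) is imposed, this simplifies to $\sum_q x^{qb^N} Q^{(q)}(x)$ with $Q^{(0)} = P^{(0)}$, $Q^{(q)} = P^{(q)} - P^{(q-1)}$ for $1 \leq q \leq b-2$, and $Q^{(b-1)} = -P^{(b-2)}$; each $Q^{(q)}$ has degree $< b^N$, so the blocks lie in disjoint degree ranges. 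Dividing the target identity $F_{N+1}(x;A) = P_{N+1}(x)\prod_{m=0}^N(1-x^{b^m})$ by $\prod_{m=0}^{N-1}(1-x^{b^m})$ (a valid polynomial division by Theorem \ref{th:factor-F}) and matching blocks reduces it to the $b$ level-$N$ identities $F_N(x;A_q) = Q^{(q)}(x)\prod_{m=0}^{N-1}(1-x^{b^m})$; the required consistency $\sum_q Q^{(q)} = 0$ is exactly $\sum_q F_N(x;A_q) = 0$.

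Applying the inductive hypothesis block-by-block yields $\mathbf{a}^{(q)} = M_N \mathbf{q}^{(q)}$ plus the level-$N$ digit condition on each $\mathbf{q}^{(q)}$. Substituting $\mathbf{q}^{(0)} = \mathbf{c}^{(0)}$, $\mathbf{q}^{(q)} = \mathbf{c}^{(q)} - \mathbf{c}^{(q-1)}$ for $1 \leq q \leq b-2$, and $\mathbf{q}^{(b-1)} = -\mathbf{c}^{(b-2)}$, and using the Kronecker structure $M_{N+1} = M_1 \otimes M_N$, these assemble precisely into $\mathbf{a}_{N+1} = M_{N+1}\mathbf{c}_{N+1}$; the level-$N$ digit condition on each $\mathbf{c}^{(q)} = \sum_{i=0}^q \mathbf{q}^{(i)}$ then descends from that on the $\mathbf{q}^{(i)}$'s. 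The main obstacle is to make the block decoupling rigorous, in particular to show that the term $x^{b^{N+1}}P^{(b-1)}(x)$ is \emph{genuinely forced} to vanish (rather than merely allowed to) and hence $\mathbf{c}^{(b-1)}=0$ is a necessary condition; the disjoint-degree argument after dividing out $\prod_{m=0}^{N-1}(1-x^{b^m})$ is what secures this.
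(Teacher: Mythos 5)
Your proof is correct and follows essentially the same route as the paper: induction on $N$, the block decomposition $F_{N+1}(x;A)=\sum_{q=0}^{b-1} x^{qb^N}F_N(x;A_q)$ coming from $u_b(qb^N+r)\equiv q+u_b(r)\pmod{b}$, the telescoping of the extra factor $(1-x^{b^N})$, block-by-block application of the inductive hypothesis, and assembly via $M_{N+1}=M_1\otimes M_N$. Your added care in justifying the block matching (disjoint degree ranges after dividing out $\prod_{m=0}^{N-1}(1-x^{b^m})$, and the degree argument forcing $\mathbf{c}^{(b-1)}=0$ as a necessary condition) only makes explicit what the paper's proof leaves implicit.
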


To prove Theorem \ref{th:matrix-equation}, we shall need the following lemmas, which we state without proof since their results are easy to verify.

\begin{lemma} \label{le:system-equations-base-case}
Let $A=(a_0,a_1,\ldots, a_{b-1})$ and $C=\{c_0,\ldots,c_{b-1}\}$.  Then the polynomial equation
\[
\sum_{n=0}^{b-1}a_nx^n=\left(\sum_{n=0}^{b-1}c_nx^n\right)(1-x)
\]
is equivalent to the system of equations
\begin{align*}
a_0 & =c_0 \\
a_1 & =-c_0+c_1 \\
\cdots \\
a_{b-1} & =-c_{b-2} +c_{b-1}
\end{align*}
together with the condition $c_{b-1}=0$.
\end{lemma}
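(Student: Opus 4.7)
The plan is to prove the lemma by straightforward coefficient comparison, since both sides of the polynomial equation are polynomials of degree at most $b$ in $x$.

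First I would expand the right-hand side:
\[
\left(\sum_{n=0}^{b-1} c_n x^n\right)(1-x) = \sum_{n=0}^{b-1} c_n x^n - \sum_{n=0}^{b-1} c_n x^{n+1} = c_0 + \sum_{n=1}^{b-1}(c_n - c_{n-1})\,x^n - c_{b-1} x^b.
\]
Then I would compare this to $\sum_{n=0}^{b-1} a_n x^n$ coefficient by coefficient. The $x^0$ coefficient forces $a_0 = c_0$; the $x^n$ coefficient for $1 \le n \le b-1$ forces $a_n = -c_{n-1} + c_n$; and the $x^b$ coefficient on the left is $0$, which forces $-c_{b-1} = 0$, i.e., $c_{b-1} = 0$. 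These are precisely the stated system together with the stated condition.

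For the converse direction, I would simply observe that if the $a_n$ are defined from the $c_n$ by the listed formulas and $c_{b-1}=0$, then substituting into $\sum_{n=0}^{b-1} a_n x^n$ and telescoping recovers $(1-x)\sum_{n=0}^{b-1} c_n x^n$, so the polynomial identity holds. Thus the two statements are logically equivalent.

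There is no real obstacle here; the content of the lemma is that the linear map sending $(c_0,\ldots,c_{b-2},0)$ to $(a_0,\ldots,a_{b-1})$ is the bidiagonal matrix $M_1$, which is invertible on the subspace $\{c_{b-1}=0\}$ and has image equal to the full space of coefficient vectors of $\sum_{n=0}^{b-1} a_n x^n$ (with no zero-sum constraint imposed at this stage, since $b-1$ equations in $b-1$ unknowns $c_0,\ldots,c_{b-2}$ are invertible). This is why the author invokes the lemma without further comment.
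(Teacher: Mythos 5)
Your coefficient-by-coefficient verification is correct and is precisely the routine check the paper omits (the lemma is stated ``without proof since their results are easy to verify''). One caveat on your closing commentary, immaterial to the lemma itself: the map $(c_0,\ldots,c_{b-2},0)\mapsto(a_0,\ldots,a_{b-1})$ is \emph{not} onto the full space of coefficient vectors --- summing the $b$ equations gives $a_0+\cdots+a_{b-1}=c_{b-1}=0$, so the image is exactly the zero-sum hyperplane, which is in fact why the zero-sum hypothesis on $A$ appears in Theorem~\ref{th:matrix-equation}.
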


\begin{lemma} \label{le:matrix-equation-base-case}
The system of equations in Lemma \ref{le:system-equations-base-case}
can be expressed in matrix form as
\[
\mathbf{a}_1=M_1\mathbf{c}_1.
\]
\end{lemma}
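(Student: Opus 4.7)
The plan is to directly expand $M_1 \mathbf{c}_1$ row by row and compare entrywise with the system of equations in Lemma \ref{le:system-equations-base-case}. Since $M_1$ is the $b \times b$ lower bidiagonal matrix whose only nonzero entries are $(M_1)_{i,i} = 1$ for $0 \le i \le b-1$ and $(M_1)_{i,i-1} = -1$ for $1 \le i \le b-1$, the product $M_1 \mathbf{c}_1$ can be read off essentially by inspection.

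First, I would unpack the $i$-th entry of $M_1 \mathbf{c}_1$: for $i = 0$ only the diagonal term survives, giving $c_0$; for $1 \le i \le b-1$ only two terms survive, giving $-c_{i-1} + c_i$. Hence
\[
M_1 \mathbf{c}_1 = \bigl(c_0,\; -c_0 + c_1,\; -c_1 + c_2,\; \ldots,\; -c_{b-2} + c_{b-1}\bigr)^T.
\]
Second, comparing this column vector entrywise with $\mathbf{a}_1 = (a_0, a_1, \ldots, a_{b-1})^T$ yields exactly the system $a_0 = c_0$ and $a_i = -c_{i-1} + c_i$ for $1 \le i \le b-1$ stated in Lemma \ref{le:system-equations-base-case}, establishing the equivalence.

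There is really no substantive obstacle here: this is a transcription of a linear system into matrix form, which is why the paper defers the verification to the reader. The only point worth flagging is that the auxiliary constraint $c_{b-1} = 0$ from Lemma \ref{le:system-equations-base-case} is \emph{not} encoded in the matrix identity itself; it persists as a separate scalar condition that must be retained alongside $\mathbf{a}_1 = M_1 \mathbf{c}_1$, and this is precisely how Theorem \ref{th:matrix-equation} states the base case.
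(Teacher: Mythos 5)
Your proof is correct: the paper states this lemma without proof ("easy to verify"), and your row-by-row expansion of $M_1\mathbf{c}_1$ is exactly the intended verification, including the correct observation that the side condition $c_{b-1}=0$ travels separately from the matrix identity.
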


\begin{proof}[Proof of Theorem \ref{th:matrix-equation}] We argue by induction on $N$.  It is clear that (\ref{eq:matrix-equation}) holds for $N=1$ since the polynomial equation $F_1(x;A)=P_1(x;C_1)(1-x)$ is equivalent to $\mathbf{a}_1=M_1\mathbf{c}_1$ because of Lemmas \ref{le:system-equations-base-case} and \ref{le:matrix-equation-base-case}.  Next, assume that (\ref{eq:matrix-equation}) holds for case $N$.  We shall prove that (\ref{eq:matrix-equation}) holds for case $N+1$.  Define 
\[
P_N(x;C_N(p))=\sum_{n=0}^{b^N-1}c_{n+pb^N}x^{n+pb^N}
\]
to be a polynomial with coefficient set $C_N(p)=\{c_{pb^N},\ldots,c_{(p+1)b^N-1}\}$.  We then expand the right-hand side of (\ref{eq:ptm-polynomial}) for case $N+1$ as follows:
\begin{align*}
P_{N+1}(x;C_{N+1})\prod_{m=0}^{N}(1-x^{b^m}) & = [P_N(x;C_N(0))+\ldots+P_N(x;C_N(b-1))]\left[\prod_{m=0}^{N-1}(1-x^{b^m})\right](1-x^{b^N}) \\
& = [Q_N(x;C_N(0))+\ldots+Q_N(x;C_N(b-1))](1-x^{b^N}) \\
&= Q_N(x;C_N(0))+[Q_N(x;C_N(1))-x^{b^N}Q_N(x;C_N(0))]+\ldots \\
& \ \ \ \ +[Q_N(x;C_N(b-1))-x^{b^N}Q_N(x;C_N(b-2))] - x^{b^N}Q_N(x;C_N(b-1)),
\end{align*}
where we define $Q_N(x;C_N(p))=P_N(x;C_N(p))(1-x^{b^N})$.
Equating this result with
\[
 F_{N+1}(x;A)=\sum_{n=0}^{b^{N+1}-1}a_{u_b(n)} x^n=F_N(x;A_N(0))+\ldots+F_N(x;A_N(b-1)),
 \]
 where
 \[
 F_N(x;A_N(p))=\sum_{n=0}^{b^N-1}a_{u(n+pb^N)}x^{n+pb^N},
 \]
 leads to the system of polynomial equations
\begin{align*}
F_N(x;A_N(0)) & =Q_N(x;C_N(0)) \\
F_N(x;A_N(1)) & =Q_N(x;C_N(1))-x^{b^N}Q_N(x;C_N(0)) \\
& \ldots \\
F_N(x;A_N(b-1)) & =Q_N(x;C_N(b-1))-x^{b^N}Q_N(x;C_N(b-2))
\end{align*}
together with the condition $Q_N(x;C_N(b-1))=0$, i.e. $c_n=0$ for all $(b-1)b^N \leq n \leq b^{N+1}-1$.
It follows from Lemmas \ref{le:system-equations-base-case} and \ref{le:matrix-equation-base-case} that this system is equivalent to the system of matrix equations
\begin{align*}
\mathbf{a}_N(0) & =M_N\mathbf{c}_N(0) \\
\mathbf{a}_N(1) & = -M_N\mathbf{c}_N(0) + M_N\mathbf{c}_N(1) \\
& \ldots \\
\mathbf{a}_N(b-1) & = -M_N\mathbf{c}_N(b-2) + M_N\mathbf{c}_N(b-1),
\end{align*}
where the first matrix equation by assumption satisfies the condition $c_n=0$ for any $0\leq n\leq b^N-1$ that contains the digit $b-1$ in its base-$b$ expansion.
This in turn is equivalent to the matrix equation $\mathbf{a}_{N+1}=M_{N+1}\mathbf{c}_{N+1}$ together with the condition that $c_n=0$ for any $0\leq n \leq b^{N+1}-1$ that contains the digit $b-1$ in its base-$b$ expansion.  This establishes the theorem for case $N+1$ and completes the proof.
\end{proof}

\begin{lemma}
The matrix $M_N$ has inverse $S_N=M_N^{-1}$, where $S_N$ is given recursively by
\begin{equation}
S_1=
\left(
\begin{array}{rrrrr}
1 & 0 & 0 &  \ldots & 0 \\
1 & 1 & 0 &  \ldots & 0 \\
& & &  \ddots  \\
1 & 1  & 1 & \ldots & 1 \\
\end{array}
\right)
\end{equation}
and for $N>1$,
\begin{equation}
S_{N+1}=S_1\otimes S_N=
\left(
\begin{array}{rrrrr}
S_N & 0_N & 0_N &  \ldots &  0_N \\
S_N & S_N & 0_N &  \ldots &  0_N \\
& & &  \ddots  \\
S_N & S_N  & S_N & \ldots & S_N \\
\end{array}
\right).
\end{equation}
Thus, if $\mathbf{a}_N=M_N\mathbf{c}_N$, then
\begin{equation} \label{eq:matrix-equation-case-N}
\mathbf{c}_N=S_N\mathbf{a}_N.
\end{equation}
\end{lemma}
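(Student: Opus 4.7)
The plan is to prove $M_N S_N = I_{b^N}$ by induction on $N$, exploiting the fact that both $M_N$ and $S_N$ are defined by the same recursive Kronecker product pattern (with respect to $M_1$ and $S_1$ respectively). Once $M_N S_N = I_{b^N}$ is established, the final equation $\mathbf{c}_N = S_N \mathbf{a}_N$ follows immediately from $\mathbf{a}_N = M_N \mathbf{c}_N$ by left multiplication.

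For the base case $N=1$, I would verify directly that $M_1 S_1 = I_b$. Writing $(M_1)_{ij} = \delta_{ij} - \delta_{i,j+1}$ and $(S_1)_{jk} = 1$ iff $j \geq k$, the product entry is
\[
(M_1 S_1)_{ik} = (S_1)_{ik} - (S_1)_{i-1,k}
\]
for $i \geq 1$ (and $(S_1)_{0k}$ when $i=0$). Since $(S_1)_{ik} - (S_1)_{i-1,k}$ vanishes unless $k=i$, in which case it is $1$, this telescoping shows $M_1 S_1 = I_b$.

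For the inductive step, assuming $M_N S_N = I_{b^N}$, I would invoke the mixed-product property of the Kronecker product, $(A \otimes B)(C \otimes D) = (AC) \otimes (BD)$, to compute
\[
M_{N+1} S_{N+1} = (M_1 \otimes M_N)(S_1 \otimes S_N) = (M_1 S_1) \otimes (M_N S_N) = I_b \otimes I_{b^N} = I_{b^{N+1}}.
\]
This shows $S_{N+1} = M_{N+1}^{-1}$, completing the induction.

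I do not expect any real obstacle here: the key observation is that the matching Kronecker recursions for $M_N$ and $S_N$ are tailor-made for the mixed-product property, so the induction collapses to the $N=1$ verification, which is a short telescoping calculation. The only minor point worth noting is that one should confirm $S_{N+1}$ defined as $S_1 \otimes S_N$ indeed has the block-lower-triangular form of all-$S_N$ blocks displayed in the statement — this is immediate from the fact that every nonzero entry of $S_1$ equals $1$.
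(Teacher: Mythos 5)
Your proof is correct and matches the paper's own argument: the paper likewise verifies $S_1=M_1^{-1}$ directly and then concludes $S_N=M_N^{-1}$ from the parallel Kronecker recursions via the mixed-product property. Your write-up merely spells out the telescoping base case and the induction more explicitly than the paper does.
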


\begin{proof}
It is straightforward to verify directly that $S_1=M_1^{-1}$.  Since $S_{N+1}=S_1\otimes S_{N}$ and $M_{N+1}=M_1\otimes M_N$, it follows that $S_N=M_N^{-1}$.
  \end{proof}

Observe that $S_N=S_{b,N}(1)$ is the generalized Sierpinski triangle defined in the previous section. 
We now present a formula for the coefficients $c_n$ in terms of the elements of $A$.

\begin{theorem} \label{th:formula-coefficients}
Let $A=\{a_0,\ldots,a_{b-1}\}$ be a zero-sum vector.  Then the polynomial equation (\ref{eq:polynomial-factorization}) has solution $P_N(x;C_N)$ whose coefficients $c_n$, $0\leq n \leq b^N-1$, are given by
\begin{equation} \label{eq:formula-coefficients}
c_n=\sum_{k\in I_b(n)}a_{u_b(k)},
\end{equation}
where $I_b(n)=\{k\in \mathbb{N}: k\preceq n \}$.  Moreover, $c_n=0$ for all $n$ whose base-$b$ expansion contains the digit $b-1$.
\end{theorem}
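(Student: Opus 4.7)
The plan is to deduce both claims directly from the matrix identity $\mathbf{c}_N = S_N \mathbf{a}_N$ of equation (\ref{eq:matrix-equation-case-N}), together with the explicit entry formula of Theorem~\ref{th:formula-for-entries-arbitrary-b} applied at $x = 1$. Recall that $S_N = S_{b,N}(1)$, and by (\ref{eq:formula-for-entries-arbitrary-b}) its $(n,k)$-entry equals $\prod_i \binom{d_i}{d_i} = 1$ whenever $k \preceq n$, and $0$ otherwise, where $d_0,\ldots,d_{N-1}$ are the base-$b$ digits of $n-k$. Thus reading off the $n$-th component of $\mathbf{c}_N = S_N \mathbf{a}_N$ gives immediately
\[
c_n = \sum_{k=0}^{b^N-1} \alpha_N(n,k,1)\, a_{u_b(k)} = \sum_{k \preceq n} a_{u_b(k)},
\]
which is (\ref{eq:formula-coefficients}). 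So the first half of the statement is essentially free once one observes that the generalized Sierpinski matrix at $x=1$ is the $0/1$ incidence matrix of the digital dominance order.

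For the second half, suppose the base-$b$ expansion of $n$ contains the digit $b-1$ at some position, say $n_i = b-1$. I would partition the index set $I_b(n) = \{k : k \preceq n\}$ according to the digits $(k_0,\ldots,k_{i-1},k_{i+1},\ldots,k_{N-1})$, leaving the $i$-th digit free. Because $n_i = b-1$, the digit $k_i$ ranges unrestricted over $\{0,1,\ldots,b-1\}$ within each equivalence class. Writing $s_{\hat i}(k) = \sum_{j \neq i} k_j$, we have $u_b(k) \equiv s_{\hat i}(k) + k_i \pmod{b}$, so as $k_i$ runs through $\{0,1,\ldots,b-1\}$ with the other digits fixed, $u_b(k)$ runs through a complete set of residues modulo $b$. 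Consequently each equivalence class contributes
\[
\sum_{k_i = 0}^{b-1} a_{u_b(k)} = \sum_{j=0}^{b-1} a_j = 0,
\]
by the zero-sum property of $A$. Summing over all classes yields $c_n = 0$.

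The argument is almost mechanical once the two ingredients (the explicit $0/1$ form of $S_N$ and the zero-sum property of $A$) are combined, so there is no real obstacle; the only point requiring a moment of care is checking that within each equivalence class the map $k_i \mapsto u_b(k) \bmod b$ is a bijection onto $\{0,1,\ldots,b-1\}$, which is immediate because it is an affine shift by $s_{\hat i}(k) \bmod b$. This completes the sketch.
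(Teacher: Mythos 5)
Your proposal is correct and follows essentially the same route as the paper: read off $c_n$ from the $n$-th row of $S_N = S_{b,N}(1)$ via Theorem~\ref{th:formula-for-entries-arbitrary-b}, then group the sum over $I_b(n)$ by the value of the digit at a position where $n$ has digit $b-1$ and invoke the zero-sum property. The paper's $I_b(n;L)$ decomposition is exactly your partition into classes with the $L$-th digit left free.
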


\begin{proof} From Theorem \ref{th:formula-for-entries-arbitrary-b}, we know that the non-zero entries in the $n$-th row of $S_N$, which are all equal to 1, are located at $(n,k)$ where $k\preceq n$.  Formula (\ref{eq:formula-coefficients}) now follows from (\ref{eq:matrix-equation-case-N}).  It remains to show that (\ref{eq:formula-coefficients}) yields $c_n=0$ for all $n$ whose base-$b$ expansion contains the digit $b-1$.  Let $n=n_0b^0+\ldots+n_Lb^L+\ldots + n_{N-1}b^{N-1}$ where $n_L=b-1$.  Denote $I_b(n;L)$ to be the subset of $I_b(n)$ consisting of integers whose base-$b$ expansion has digit 0 at position $L$, i.e.,
\[
I_b(n;L)=\{k\in \mathbb{N}:k\preceq n, k=k_0b^0+\ldots+\ldots+k_Nb^N, k_L=0\}.
\]
Then using the fact that $A$ is a zero-sum vector, i.e., $a_0+\ldots + a_{b-1}=0$, we have
\begin{align*}
c_n & =\sum_{k\in I_b(n;L)}a_{u_b(k)} + \sum_{k\in I_b(n;L)}a_{u_b(k+b^L)}+\ldots+\sum_{k\in I_b(n;L)}a_{u_b(k+(b-1)b^L))} \\
& =\sum_{k\in I_b(n;L)}[a_{u_b(k)} + a_{(u_b(k)+1)_b}+\ldots+a_{(u_b(k)+(b-1))_b}] \\
& = 0
\end{align*}
as desired.
\end{proof}

Next, we specialize Theorem \ref{th:formula-coefficients} to base $b=3$.  Define $w(n)$ to be the sum of the digits of $n$ in its base-$3$ representation modulo 2.

\begin{corollary} Suppose $b=3$.  Let $n\in \mathbb{N}$ be such that its base-3 representation does not contain the digit 2.  Then 
\begin{equation} \label{eq:formula-coefficient-base-3}
c_n=(-1)^{w(n)}a_{u_3(2n)}.
\end{equation}
\end{corollary}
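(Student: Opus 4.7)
The plan is to reduce the claim to a purely combinatorial identity about binomial sums weighted by a zero-sum triple, and then prove that identity by an easy induction on the number of nonzero digits of $n$.

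First I would use Theorem \ref{th:formula-coefficients} to rewrite $c_n$ explicitly.  Since the base-$3$ expansion of $n$ avoids the digit $2$, we can write $n=\sum_{i\in S}3^i$, where $S$ is the set of positions at which $n$ has digit $1$; set $s=|S|$.  The condition $k\preceq n$ then forces $k=\sum_{i\in T}3^i$ for some $T\subseteq S$, and for such $k$ we have $u_3(k)=|T|\bmod 3$.  Consequently,
\[
c_n=\sum_{T\subseteq S}a_{|T|\bmod 3}=\sum_{j=0}^{s}\binom{s}{j}a_{j\bmod 3}.
\]
On the other side of the target identity, $2n=\sum_{i\in S}2\cdot 3^i$ has digit $2$ at each position in $S$ and $0$ elsewhere, so $u_3(2n)=2s\bmod 3$.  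Also, since every digit of $n$ lies in $\{0,1\}$, the base-$3$ digit sum of $n$ equals $s$, giving $(-1)^{w(n)}=(-1)^s$.  Thus it suffices to prove
\[
\sum_{j=0}^{s}\binom{s}{j}\,a_{j\bmod 3}=(-1)^s\,a_{2s\bmod 3}
\]
for every zero-sum triple $(a_0,a_1,a_2)$.

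The final and main step is to establish this last identity by induction on $s$.  The base case $s=0$ is immediate.  For the inductive step, Pascal's rule gives
\[
\sum_{j}\binom{s+1}{j}a_{j\bmod 3}=\sum_{j}\binom{s}{j}a_{j\bmod 3}+\sum_{j}\binom{s}{j}a_{(j+1)\bmod 3}.
\]
The second sum has exactly the same form as the first, but for the cyclically shifted triple $(a_1,a_2,a_0)$, which is itself a zero-sum triple; hence the inductive hypothesis applies to both pieces, yielding $(-1)^s a_{2s\bmod 3}$ and $(-1)^s a_{(2s+1)\bmod 3}$ respectively.  Because $a_0+a_1+a_2=0$, and because $2s$ and $2s+1$ represent two of the three residue classes modulo $3$, we have $a_{2s\bmod 3}+a_{(2s+1)\bmod 3}=-a_{2(s+1)\bmod 3}$, and the induction closes.

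The only nontrivial point is observing that cyclic shifts preserve the zero-sum condition so the inductive hypothesis truly applies to the shifted triple; everything else is bookkeeping.  As an alternative proof of the key identity I could apply a roots-of-unity filter with $\omega=e^{2\pi i/3}$, using $1+\omega=-\omega^2$ and $1+\omega^2=-\omega$ to collapse $(1+\omega)^s$ and $(1+\omega^2)^s$ into pure powers of $\omega$; this route bypasses the induction but seems less transparent than Pascal's rule combined with the cyclic symmetry of the zero-sum condition.
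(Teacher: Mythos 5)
Your proof is correct, but it takes a genuinely different route from the paper's. The paper inducts on the number of base-$3$ digits of $n$, splitting off the leading digit and, in its main case, adding and subtracting an auxiliary sum over $I_3(n';L)$ so that the zero-sum property and the inductive hypothesis (applied at both $n'$ and $n''=n'-3^L$) can be invoked; the result is a fairly long chain of manipulations of sums over the sets $I_3(\cdot)$. You instead observe at the outset that, because every digit of $n$ is $0$ or $1$, the sum in \eqref{eq:formula-coefficients} depends only on $s$, the number of nonzero digits, and collapses to $\sum_{j=0}^{s}\binom{s}{j}a_{j\bmod 3}$; the corollary then reduces to the clean one-variable identity $\sum_{j}\binom{s}{j}a_{j\bmod 3}=(-1)^s a_{2s\bmod 3}$, which you prove by induction on $s$ using Pascal's rule together with the facts that cyclic shifts preserve the zero-sum condition and that $a_{2s\bmod 3}+a_{(2s+1)\bmod 3}=-a_{2(s+1)\bmod 3}$. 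All the supporting computations check out: $k\preceq n$ does force $k=\sum_{i\in T}3^i$ with $T\subseteq S$, doubling $n$ produces no carries so $u_3(2n)=2s\bmod 3$, and $(-1)^{w(n)}=(-1)^s$. What your approach buys is transparency and an explicit isolation of the combinatorial core (the binomial identity), plus a slick alternative via the roots-of-unity filter; what the paper's approach buys is that it works directly with the sets $I_3(n)$ in the same style as the proof of Theorem \ref{th:formula-coefficients}, at the cost of a more opaque calculation.
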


\begin{proof} Let $n=n_03^0+\ldots+n_{N-1}3^{N-1}$ be the base-3 representation of $n$ with no digit equal to 2 and leading digit $n_{N-1}=1$.  We argue by induction on $N$.  It is clear that (\ref{eq:formula-coefficient-base-3}) holds when $N=1$ since from (\ref{eq:formula-coefficients}) we have
\[
c_1=a_0+a_1=-a_2=(-1)^{w(n)}a_{u_3(2n)}.
\]
Next, assume (\ref{eq:formula-coefficient-base-3}) holds for a given $N$.  To prove that (\ref{eq:formula-coefficient-base-3}) holds for $N+1$, we consider two cases by decomposing $n=n'+3^{N}$.
\\

\noindent Case 1: $n_i=0$ for all $i=0,\ldots,N-1$.  Then $n=3^n$ and
\begin{align*}
c_{n} & =\sum_{k\in I_3(n)}a_{u_3(k)}=a_{u_3(0)}+a_{u_3(n)}=a_0+a_1 = -a_2 \\
& = (-1)^{w(n)}a_{u_3(2n)}.
\end{align*}

\noindent Case 2: $n_L=1$ for some $0\leq L \leq N-1$.  Then set $n''=n'-3^L$.  It follows that
\begin{align*}
c_{n} & =\sum_{k\in I_3(n)}a_{u_3(k)}=\sum_{k\in I_3(n,L)}a_{u_3(k)}+\sum_{k\in I_3(n,L)}a_{u_3(k+3^N)} \\
& =\sum_{k\in I_3(n')}a_{u_3(k)}+\sum_{k\in I_3(n')}a_{u_3(k+3^L)} \\
& =\sum_{k\in I_3(n')}a_{u_3(k)}+\sum_{k\in I_3(n',L)}a_{u_3(k+3^L)}+\sum_{k\in I_3(n',L)}a_{u_3(k+2\cdot 3^L)} +\sum_{k\in I_3(n',L)}a_{u_3(k)}-\sum_{k\in I_3(n',L)}a_{u_3(k)} \\
& =\sum_{k\in I_3(n')}a_{u_3(k)}+\sum_{k\in I_3(n',L)}[a_{u_3(k)}+a_{(u_3(k)+1)_3} +a_{(u_3(k)+2)_3}]-\sum_{k\in I_3(n'')}a_{u_3(k)} \\
& = (-1)^{w(n')}a_{u_3(2n')}- (-1)^{w(n'')}a_{u_3(2n'')} \\
& = (-1)^{w(n')}a_{u_3(2n')}+(-1)^{w(n''+3^L)}a_{u_3(2(n'-3^L))} \\
& = (-1)^{w(n')}[a_{u_3(2n')}+a_{(u_3(2n')-2)_3} ] = (-1)^{w(n')}[-a_{(u_3(2n')-1)_3} ]\\
& = (-1)^{w(n'+3^N)}a_{(u_3(2n')+2)_3))} = (-1)^{w(n)}a_{u_3(2n'+2\cdot 3^L)} \\
& = (-1)^{w(n)}a_{u_3(2n)}.
\end{align*}
Thus, (\ref{eq:formula-coefficient-base-3}) holds for $N+1$.
\end{proof}


\section{Group Generators and Relations}

In this section, we describe group generators and relations defined by the matrices $S_N$ and $M_N$.  Recall that $S_N$ and $M_N$ are matrices of dimension $b^N\times b^N$ for a given base $b$.  Define $T_N=M_N^t$ to be the transpose of $M_N$ and $U_N=S_NT_N$, $V_N=T_NS_N$.  
The following lemma gives a recursive construction of $U_N$ and$V_N$.

\begin{lemma} We have
\begin{align*}
U_{N+1} & =U_1\otimes U_N \\
V_{N+1} & = V_1\otimes V_N
\end{align*}
\end{lemma}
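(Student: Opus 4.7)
The proof plan is to unwind the definitions and invoke two elementary facts about the Kronecker product: (i) transpose distributes, $(A\otimes B)^{t}=A^{t}\otimes B^{t}$, and (ii) the mixed-product property, $(A\otimes B)(C\otimes D)=(AC)\otimes(BD)$, whenever the inner dimensions match. Both facts are standard and require no separate argument here.

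First I would record a preliminary identity for the transpose matrices $T_N=M_N^{t}$. Since $M_{N+1}=M_1\otimes M_N$ by construction, fact (i) gives
\[
T_{N+1}=M_{N+1}^{t}=(M_1\otimes M_N)^{t}=M_1^{t}\otimes M_N^{t}=T_1\otimes T_N.
\]
So the transposes satisfy the same tensor recursion as the $M_N$ themselves.

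Next I would combine this with the analogous recursion $S_{N+1}=S_{b,1}(1)\otimes S_{b,N}(1)=S_1\otimes S_N$ already established in Section 2. Applying the mixed-product property,
\[
U_{N+1}=S_{N+1}T_{N+1}=(S_1\otimes S_N)(T_1\otimes T_N)=(S_1T_1)\otimes(S_NT_N)=U_1\otimes U_N,
\]
and likewise
\[
V_{N+1}=T_{N+1}S_{N+1}=(T_1\otimes T_N)(S_1\otimes S_N)=(T_1S_1)\otimes(T_NS_N)=V_1\otimes V_N,
\]
which is the claimed pair of identities.

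There is no real obstacle in this argument; the statement is essentially a bookkeeping consequence of the fact that both factors in the products defining $U_N$ and $V_N$ are built from the same Kronecker-product recursion, and the only thing one has to check is that the mixed-product property applies, i.e.\ that the inner matrix dimensions $b^{N}$ agree on the two sides of each product. Since $S_N$, $M_N$, and hence $T_N$ are all $b^N\times b^N$, this is automatic. If desired, a one-line induction on $N$ can be used to present the argument formally, with the $N=1$ case being trivial by definition.
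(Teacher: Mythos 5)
Your proposal is correct and follows essentially the same route as the paper: both arguments reduce the claim to the mixed-product property of the Kronecker product applied to $S_{N+1}=S_1\otimes S_N$ and $T_{N+1}=T_1\otimes T_N$. The only difference is that you explicitly justify $T_{N+1}=T_1\otimes T_N$ via $(A\otimes B)^{t}=A^{t}\otimes B^{t}$, a step the paper leaves implicit.
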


\begin{proof} The result follows from the mixed-product property of the Kronecker product.  We demonstrate this for $U_{N+1}$:
\begin{align*}
U_{N+1}=S_{N+1}T_{N+1}=(S_1\otimes S_N)(T_1\otimes T_N)=(S_1T_1)\otimes (S_NT_N)=U_1\otimes U_N.
\end{align*}
The calculation is the same for $V_{N+1}$.
\end{proof}

Observe that $U_N=(u_{i,j})$ and $V_N=(v_{i,j})$ are skew-triangular and that $V_N$ is the skew-transpose of $U_N$, i.e., $v_{i,j}=u_{b-1-j,b-1-i}$ for $i,j=0,1,\ldots,b-1$.  Here are some examples of $U_N$ and $V_N$ when $b=2$:
\begin{equation}
U_1=\left(
\begin{array}{rr}
1 & -1 \\
1 & 0
\end{array}
\right), \ \
U_2=\left(
\begin{array}{rrrr}
1 & -1 & -1 & 1 \\
1 & 0 & -1 & 0 \\
1 & -1 & 0 & 0 \\
1 & 0 & 0 & 0
\end{array}
\right), \ \
U_3=\left(
\begin{array}{rrrrrrrr}
1 & -1 & -1 & 1 & -1 & 1 & 1 & -1 \\
1 & 0 & -1 & 0 &-1 & 0 & 1 & 0 \\
1 & -1 & 0 & 0 & -1 & 1 & 0 & 0 \\
1 & 0 & 0 & 0 & -1 & 0 & 0 & 0 \\
1 & -1 & -1 & 1 & 0 & 0 & 0 & 0 \\
1 & 0 & -1 & 0 & 0 & 0 & 0 & 0 \\
1 & -1 & 0 & 0 & 0 & 0 & 0 & 0 \\
1 & 0 & 0 & 0 & 0 & 0 & 0 & 0
\end{array}
\right)
\end{equation}

\begin{equation}
V_1=\left(
\begin{array}{rr}
0 & -1 \\
1 & 1
\end{array}
\right), \ \
V_2=\left(
\begin{array}{rrrr}
0 & 0 & 0 & 1 \\
0 & 0 & -1 & -1 \\
0 & -1 & 0 & -1 \\
1 & 1 & 1 & 1
\end{array}
\right), \ \
V_3=\left(
\begin{array}{rrrrrrrr}
0 & 0 & 0 & 0 & 0 & 0 & 0 & -1 \\
0 & 0 & 0 & 0 & 0 &  0 & 1 & 1 \\
0 & 0 & 0 & 0 & 0 & 1 & 0 & 1 \\
0 & 0 & 0 & 0 & -1 & -1 & -1 & -1 \\
0 & 0 & 0 & 1 & 0 & 0 & 0 & 1 \\
0 & 0 & -1 & -1 & 0 & 0 & -1 & -1\\
0 & -1 & 0 & -1 & 0 & -1 & 0 & -1\\
1 & 1 & 1 & 1 & 1 & 1 & 1 & 1
\end{array}
\right).
\end{equation}

The next lemma establishes that the eigenvalues of $U_N$ and $V_N$ are $(b+1)$-th roots of 1 or $-1$.

\begin{lemma} \label{le:eigenvalues}
The set of eigenvalues of $U_1$ and $V_1$ are exactly the same and consist of all roots of the polynomial equation
\[
-1+r-r^2+\ldots+(-1)^{b+1} r^b=0.
\]
\end{lemma}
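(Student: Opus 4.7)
My plan is to reduce the computation of $\det(rI-U_1)$ to a single application of the matrix determinant lemma by exploiting the fact that $U_1$ is the sum of a rank-one ``column-of-ones'' matrix and a nilpotent bidiagonal matrix, and then to deduce coincidence of the two spectra from the classical identity $\det(rI-AB)=\det(rI-BA)$ for square factors of equal size.

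First I would carry out the product $U_1=S_1T_1$ explicitly. Since $S_1$ is the all-ones lower-triangular matrix and $T_1=M_1^t$ is bidiagonal with $1$'s on the diagonal and $-1$'s on the superdiagonal, a two-term telescoping gives $(U_1)_{i,0}=1$ for every $i$, $(U_1)_{i,i+1}=-1$ for $0\le i\le b-2$, and zero elsewhere; equivalently $U_1=\mathbf{1}e_0^t+N$, where $\mathbf{1}$ is the all-ones column vector, $e_0$ is the first standard basis vector, and $N$ is the matrix with $-1$ on the superdiagonal. The matrix $rI-N$ is upper-triangular and bidiagonal with $r$ on the diagonal and $1$ on the superdiagonal, so $\det(rI-N)=r^b$ and back-substitution yields the closed form $\bigl((rI-N)^{-1}\bigr)_{i,j}=(-1)^{j-i}/r^{j-i+1}$ for $0\le i\le j\le b-1$. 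Applying the matrix determinant lemma to $rI-U_1=(rI-N)-\mathbf{1}e_0^t$ then gives
\[
\det(rI-U_1)=r^b\Bigl(1-e_0^t(rI-N)^{-1}\mathbf{1}\Bigr)=r^b\Bigl(1-\sum_{j=0}^{b-1}\frac{(-1)^j}{r^{j+1}}\Bigr)=\frac{r^{b+1}+(-1)^b}{r+1},
\]
after summing the finite geometric series. A one-line rearrangement identifies this closed form with $\pm\bigl(-1+r-r^2+\cdots+(-1)^{b+1}r^b\bigr)$, so the eigenvalues of $U_1$ are exactly the roots of the stated polynomial.

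For the coincidence of the spectra of $U_1=S_1T_1$ and $V_1=T_1S_1$, I would invoke the classical identity $\det(rI-AB)=\det(rI-BA)$ for equal-sized square matrices; this gives $U_1$ and $V_1$ the same characteristic polynomial, hence the same set of eigenvalues. The only step requiring real attention is the matrix determinant lemma computation together with the bookkeeping for the entries of $(rI-N)^{-1}$; the rest is elementary geometric-series algebra and a standard fact about $AB$ versus $BA$.
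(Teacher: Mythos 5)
Your argument is correct, but it takes a genuinely different route from the paper. The paper proves the lemma by exhibiting, for each root $r$ of $-1+r-r^2+\cdots+(-1)^{b+1}r^b$, an explicit candidate eigenvector $\mathbf{v}$ with components $v_k=\sum_{j=1}^{k}(-1)^{j+1}r^j$ (and $v_b=0$) and verifying entrywise that $(U_1-rI_b)\mathbf{v}=0$; completeness of the list of eigenvalues then rests on the roots being $b$ distinct numbers for a $b\times b$ matrix, and the $V_1$ case is dispatched with ``a similar argument.'' You instead compute the characteristic polynomial outright: the decomposition $U_1=\mathbf{1}e_0^t+N$ is correct (it matches the displayed $U_1$ for $b=2$), the matrix determinant lemma plus the explicit inverse $\bigl((rI-N)^{-1}\bigr)_{i,j}=(-1)^{j-i}r^{-(j-i+1)}$ gives $\det(rI-U_1)=\bigl(r^{b+1}+(-1)^b\bigr)/(r+1)$, and this is $(-1)^{b+1}$ times the paper's polynomial, so the spectra coincide as claimed; the identity $\det(rI-S_1T_1)=\det(rI-T_1S_1)$ then handles $V_1$ with no further computation. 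Your version buys more: it determines the characteristic polynomial itself (hence multiplicities and the fact that there are no \emph{other} eigenvalues, which the paper leaves implicit), and it treats $V_1$ cleanly rather than by assertion; the paper's version buys explicit eigenvectors, though these are not used downstream beyond their linear independence. The only points to tidy are routine: the lemma and the inversion of $rI-N$ presuppose $r\neq 0$ and the closed form presupposes $r\neq -1$, but since both sides are polynomials the identity extends to all $r$; and you should note (as the paper implicitly does) that diagonalizability of $U_1$, needed later in Theorem \ref{th:group-relation}, follows from the $b$ roots of $\bigl(r^{b+1}+(-1)^b\bigr)/(r+1)$ being distinct.
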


\begin{proof}
Let $r$ be a root of $-1+r-r^2+\ldots+(-1)^{b+1} r^b=0$.  We claim that $r$ is an eigenvalue of $U_1$ with eigenvector $\mathbf{v}=(v_1,\ldots, v_b)^T$, where 
\[
v_k=\left\{
\begin{array}{cl}
\sum_{j=1}^k (-1)^{j+1}r^j, & 1\leq k \leq b-1 \\
0, & k=b.
\end{array}
\right.
\]
Denote $\mathbf{w}=(U_1-rI_b)\mathbf{v}=\{w_1,\ldots,w_b\}$.  It suffices to show $\mathbf{w}=0$.  Assume $b=2$.  We have
\begin{align*}
w_1 & =(1-r)r-1=-1+r-r^2 =0 \\
w_2 & =r-r=0.
\end{align*}
For $b>2$, we have
\begin{align*}
w_1 & =(1-r)r-(r-r^2)=0 \\
w_2 & =r-r(r-r^2)-(r-r^2+r^3)=0 \\
& \ldots \\
w_{b-1} & = r - r(r-r^2+r^3+\ldots+r^{b-1})-1=-1+r-r^2+\ldots +(-1)^{b+1}r^b = 0 \\
w_b & =r-r=0.
\end{align*}
Thus, $\mathbf{w}=0$.  It can be shown by a similar argument that the eigenvalues of $V_1$ are exactly the same as those of $U_1$.
\end{proof}

\begin{theorem} \label{th:group-relation}
The matrices $U_N$ and $V_N$ satisfy the relation
\begin{equation}\label{eq:group-relation}
U_N^{b+1}=V_N^{b+1}=(-1)^{N(b+1)}I_{b^N},
\end{equation}
where $I_{b^N}$ is the $b^N\times b^N$ identity matrix.
\end{theorem}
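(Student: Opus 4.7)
The plan is to reduce the statement to a base case at $N=1$ and then propagate it via the Kronecker-product recurrences $U_{N+1}=U_1\otimes U_N$ and $V_{N+1}=V_1\otimes V_N$ established in the previous lemma. The crux is to verify that $U_1^{b+1}=V_1^{b+1}=(-1)^{b+1}I_b$; once this is in hand, the mixed-product property $(A\otimes B)^k=A^k\otimes B^k$ together with a straightforward induction on $N$ gives
\[
U_{N+1}^{b+1}=U_1^{b+1}\otimes U_N^{b+1}=(-1)^{b+1}I_b\otimes (-1)^{N(b+1)}I_{b^N}=(-1)^{(N+1)(b+1)}I_{b^{N+1}},
\]
and likewise for $V_{N+1}$, closing the induction.

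For the base case I would appeal to Lemma \ref{le:eigenvalues}. Rewriting $-1+r-r^2+\cdots+(-1)^{b+1}r^b$ as $-\sum_{k=0}^{b}(-r)^k$ and summing the geometric series shows that, for $r\neq -1$, this expression vanishes precisely when $(-r)^{b+1}=1$, equivalently when $r^{b+1}=(-1)^{b+1}$. Thus the eigenvalues of $U_1$ are the $b$ distinct values among the $(b+1)$-th roots of $(-1)^{b+1}$ other than $-1$. Because $U_1$ is $b\times b$ with $b$ distinct eigenvalues, it is diagonalizable, and since every eigenvalue $r$ satisfies $r^{b+1}=(-1)^{b+1}$, the spectral mapping theorem yields $U_1^{b+1}=(-1)^{b+1}I_b$. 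The identical argument applies to $V_1$ since Lemma \ref{le:eigenvalues} identifies its spectrum with that of $U_1$.

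The main obstacle I anticipate is justifying diagonalizability of $U_1$, which requires knowing the $b$ eigenvalues produced by Lemma \ref{le:eigenvalues} are genuinely distinct; the geometric-series reformulation handles this cleanly by exhibiting them as the $(b+1)$-th roots of $(-1)^{b+1}$ with the root $r=-1$ removed. After this, the rest is a mechanical induction using the Kronecker-product structure from the preceding lemma, and no further subtleties arise.
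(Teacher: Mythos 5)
Your proposal is correct and follows essentially the same route as the paper: the base case $U_1^{b+1}=V_1^{b+1}=(-1)^{b+1}I_b$ via Lemma \ref{le:eigenvalues}, followed by induction on $N$ using the mixed-product property of the Kronecker product. Your geometric-series reformulation, which identifies the spectrum as the $(b+1)$-th roots of $(-1)^{b+1}$ with $r=-1$ excluded, merely makes explicit the distinctness and diagonalizability that the paper asserts without detail.
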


\begin{proof}
We shall only prove (\ref{eq:group-relation}) for $U_N$ since the proof for $V_N$ is the same.  We argue by induction on $N$.  When $N=1$, we know from Lemma \ref{le:eigenvalues} that all eigenvalues of $U_1$ are roots of 
$-1+r-r^2+\ldots+(-1)^{b+1} r^b=0$.
It follows that every eigenvalue $r$ satisfies $r^{b+1}=(-1)^{b+1}$ and since they are all distinct, the corresponding eigenvectors are all linearly independent.  Thus, $U_1^{b+1}=(-1)^{b+1}I_b$.

Next, assume that (\ref{eq:group-relation}) holds for $U_{N-1}$.  It follows from the mixed-product property of the Kronecker product that
\begin{align*}
U_N^{b+1} & = (U_1\otimes U_{N-1})^{b+1} = U_1^{b+1}\otimes U_{N-1}^{(N-1)(b+1)} \\
& =(-1)^{b+1}I_b\otimes (-1)^{(N-1)(b+1)}I_{b^{N-1}} \\
& =(-1)^{N(b+1)}I_{b^N}.
\end{align*}
Thus,  (\ref{eq:group-relation}) holds for $N$.
\end{proof}

We note that for $b=2$,  Ferrand proved in \cite{F1} that the matrices $S_N$ and $T_N$ satisfy the 3-strand braid relation
\begin{equation} \label{eq:braid-relation}
S_NT_NS_N=T_NS_NT_N.
\end{equation}
We give an alternate proof of (\ref{eq:braid-relation}) based on Theorem \ref{th:group-relation}.  Define $Q_N=S_NT_NS_N$ and $R_N=T_NS_NT_N$.  Then $Q_NR_N=U_N^3=(-1)^{3N}I_{2^N}=(-1)^NI_{2^N}$ because of (\ref{eq:group-relation}).  Moreover, we claim that
 $Q_N^2=R_N^2=(-1)^NI_{2^N}$.  This follows by induction on $N$, which we demonstrate for $Q_N$ by again using the mixed-product property of the Kronecker product:
 \begin{align*}
Q_N^2 &=(S_NT_NS_N^2T_NS_N) \\
& = (S_1T_1S_1^2T_1S_1)\otimes(S_{N-1}T_{N-1}S_{N-1}^2T_{N-1}S_{N-1}) \\
& = (-I_2)\otimes((-1)^{N-1}I_{2^{N-1}}) \\
& = (-1)^N I_{2^N}
\end{align*}
Thus, $Q_NR_N=Q_N^2$, which implies the braid relation $Q_N=R_N$.   However, we find that the braid relation fails to hold for $b>2$. 


\end{document}